\numberwithin{equation}{section}
\DeclareMathOperator*{\argmax}{arg\,max}
\DeclareMathOperator*{\argmin}{arg\,min}
\newtheorem{theorem}{Theorem}[section]
\newtheorem{lemma}{Lemma}
\newtheorem{proposition}[theorem]{Proposition}
\newtheorem{remark}[theorem]{Remark}
\font\bigbf=cmbx10 scaled \magstep3
\title{\bigbf Mechanism design for coordinating vehicle-based mobile sensing tasks within the ride-hailing platform}
\author[1]{Shenglin Liu}
\author[1]{Qian Ge\thanks{Corresponding author. Email: geqian@swjtu.edu.cn}}
\author[2]{Ke Han\thanks{Corresponding author. Email: kehan@swjtu.edu.cn}}
\author[3]{Daisuke Fukuda}
\author[4]{Takao Dantsuji}
\affil[1]{School of Transportation and Logistics, Southwest Jiaotong University, Chengdu, China}
\affil[2]{School of Economics and Management, Southwest Jiaotong University, Chengdu, China}
\affil[3]{Department of Civil Engineering, The University of Tokyo, Tokyo, Japan}
\affil[4]{Institute of Transport Studies, Monash University, Australia}
\begin{document}
	\maketitle
	\begin{abstract}
		This paper evaluates the benefit of integrating vehicle-based mobile crowd-sensing tasks into the ride-hailing system through the collaboration between the data user and the ride-hailing platform. In such a system, the ride-hailing platform commissions high-valued sensing tasks to idle drivers who can undertake either ride-hailing or sensing requests. Considering the different service requirements and time windows between sensing and ride-hailing requests, we design a staggered operation strategy for ride-hailing order matching and the sensing task assignment. The auction-based mechanisms are employed to minimize costs while incentivizing driver participation in mobile sensing. To address the budget deficit problem of the primal VCG-based task assignment mechanism, we refine the driver selection approach and tailor the payment rule by imposing additional budget constraints. We demonstrate the benefits of our proposed mechanism through a series of numerical experiments using the NYC Taxi data. Experimental results reveal the potential of the mechanism for achieving high completion rates of sensing tasks at low social costs without degrading ride-hailing services. Furthermore, drivers who participate in both mobile sensing tasks and ride-hailing requests may gain higher income, but this advantage may diminish with an increasing number of such drivers and higher demand for ride-hailing services.
	\end{abstract}
	\noindent {\it Keywords: Mechanism design, mobile crowd-sensing, ride-hailing system} 
	
	\section{Introduction}\label{section:intro}
	In the past decade, the ride-hailing industry emerged as a significant door-to-door transportation mode, reshaping urban mobility in many countries. However, the market's rapid growth appears to have slowed in its two largest markets, i.e., the United States \citep{ridehailus} and China \citep{ridehailchina}, leading the ride-hailing platforms into fierce competition with both external and internal companies. This motivates ride-hailing platforms to explore new niche markets, including other vehicle-based crowd-sourced activities, alongside human mobility \citep{li2014share}. For example, food or freight delivery service constitutes a non-negligible part of the revenue of Uber \citep{uber2023exp}. In China, the problem is more subtle with the advent of third-party integrators of the ride-hailing service, such as Amap and Baidu. Many car rental companies enter the ride-hailing business by leasing vehicles to drivers and assigning the ride-hailing orders to them with the help of the integrators. In other words, the ride-hailing market itself is ironically a new niche market for car rental companies. Instead, some researchers and practitioners have explored the integration of taxi or ride-hailing service with vehicle-based mobile sensing \citep{xu2019ilocus, wu2020application}.
	
	In the mobile sensing system, either human workers or vehicles equipped with \textit{ad hoc} sensors visit specified points of interest passively or in response to platform requests. They transmit collected data to processing systems and receive rewards from data users. This sensing approach has been applied to a variety of scenarios in urban management, including pollution monitoring \citep{hasenfratz2015deriving,jezdovic2021crowdsensing}, infrastructure maintenance \citep{eriksson2008pothole}, and traffic congestion management \citep{guo2022sensing}. However, vehicle-based mobile sensing, while potentially cost-effective for data users, is limited by sampling bias due to the sensitivity of the resulting dataset to the vehicles' mobility patterns. Despite the development of incentive mechanisms aimed at synchronizing vehicle trajectories with task requirements, they often rely on either compensation or punishment of drivers for their involvement in rarely visited tasks, deviating from the merits of crowd-sourcing. Meanwhile, a small and agile fleet of vehicles dedicated to mobile sensing is necessary to compensate for the dataset's insufficiencies.
	
	Inspired by \cite{fehn2023integrating}, we investigate an operational strategy for vehicle-based mobile crowd-sensing that integrates sensing tasks with ride-hailing requests, incentivizing drivers to undertake tasks while maintaining service levels for regular riders. In this business model, a data user collaborates with a ride-hailing platform by releasing a third-party app for sensing tasks and commissioning these tasks to a pool of available drivers. Both the driver and the ride-hailing platform receive commissions from the data user for successfully completed sensing tasks. Any remaining sensing tasks, such as areas where no driver is available, would be handled by a dedicated vehicle fleet owned by the data user. This approach allows the data user to cut down its own vehicle fleet for data sensing, enables drivers to earn extra income through side gigs, and may increase profits for the ride-hailing platform. The operational strategy is illustrated in Figure \ref{fig:enter-label}.

	\begin{figure}[htbp]
		\centering
		\includegraphics[width=\textwidth]{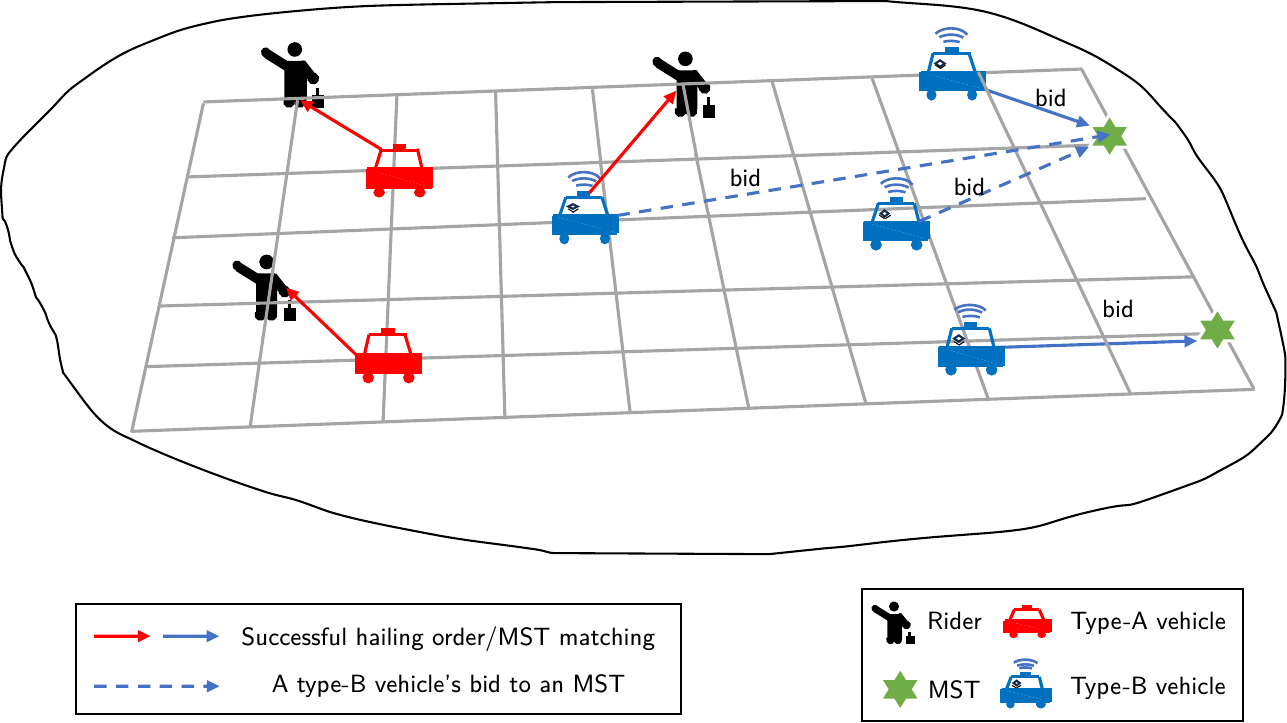}
		\caption{Schematic of the ride-hailing order and MST assignment system. Type-A vehicles are exclusively designated for ride-hailing orders, whereas Type-B vehicles have the flexibility to handle both passenger delivery and mobile sensing tasks. Initially, the ride-hailing platform matches vehicles with riders and then allocates mobile sensing tasks to the remaining Type-B vehicles through an auction-based procedure.}
		\label{fig:enter-label}
	\end{figure}
	
	Arguably, sensing tasks could be outsourced to ride-hailing platforms by generating an equivalent number of `artificial ride-hailing requests' at points of interest (POIs) and assigning these tasks to drivers through the same procedure used for ordinary ride pickups. While this strategy may benefit the data user, it may not align with the operational objectives of the ride-hailing platform, which must prioritize ride requests to ensure service levels and maintain customer loyalty. Additionally, sensing tasks differ from ride-hailing requests not only in their spatial distributions and pickup/drop-off time windows (or earliest commencement and latest completion times for sensing tasks), but also in factors such as service quality and driver commitment. The driver-rider matching pattern, along with the prescribed taxi fare for ordinary ride-hailing requests, may not be suitable for sensing tasks. In the former case, the ride-hailing platform would have to address rider complaints about dissatisfied service and possibly downgrade the ratings of responsible drivers. However, drivers could easily opt out of undesirable sensing tasks without facing significant penalties, as the mobile sensing tasks (MSTs) constitute only a small portion of all tasks. To tackle these challenges, the ride-hailing platform may need to implement different rules for matching drivers with MSTs and ride-hailing requests.
	
	To the best of our knowledge, this paper presents the first operational strategy for addressing the integrated mobile sensing and ride-hailing task assignment problem within a unified ride-hailing system, aimed at benefiting all involved stakeholders. Specifically, our objectives are as follows:
	\begin{enumerate}[label=(\roman*)]
		\item How can we ensure that all stakeholders, including the data user, the ride-hailing platform, and the drivers, derive benefits or, at the very least, do not suffer any negative impact from the introduction of MSTs to the ride-hailing platform?
		\item What methods can be employed to incentivize drivers to accept and successfully complete sensing tasks? 
		\item How to conduct sensing tasks while maintaining a satisfactory service level of trip matching?
		\item Given a limited budget from the data user, how can we achieve adequate coverage of areas of interest through vehicle-based mobile crowd-sensing?
		\item What is the impact of system design parameters on the effectiveness of the operational strategy?
	\end{enumerate}
	We make the following contributions in this paper:
	\begin{enumerate}[label=(\roman*)]
		\item  We introduce an operational strategy for ride-hailing platforms that coordinates the assignment of trip requests and mobile sensing tasks. This strategy involves three phased assignments: trip request matching, simultaneous trip request matching, and mobile-sensing task bidding, and mobile-sensing task allocation. To prioritize the level of service for ride-hailing users, trip requests are prioritized in the first and second phases, with the third phase dedicated to mobile-sensing tasks. Notably, only redundant vehicles failing to match trip requests are eligible to bid for mobile sensing tasks in the second phase. Additionally, a driver's bid is revoked upon being matched to a trip request.
		\item While formalizing trip request matching as a bipartite graph matching problem, we commission mobile-sensing tasks to drivers through an auction-based approach. This approach is a refinement of the classical Vickrey-Clarke-Groves (VCG) mechanism, with an added guarantee on the total budget. This refinement ensures favorable economic properties such as individual rationality (IR), incentive compatibility (IC), and budget balance (BB).
		\item We evaluate the performance of the proposed model framework on the TLC Trip Record Data. Computational results demonstrate that our coordinated operational strategy enhances the welfare of the data user, ride-hailing platform, and drivers without compromising service levels for ride users. Furthermore, our refined mechanism reduces the total cost of mobile sensing compared to the VCG mechanism. This cost reduction becomes more significant with the expansion of the vehicle fleet.
	\end{enumerate}
	
	The remainder of this paper is organized as follows. Section \ref{section:review} reviews literature related to this study. Section \ref{section:model} presents the model framework for coordinating the trip matching and the task assignment problems. Our refined mechanism for task assignment and its economic properties are described in Section \ref{section:rbc}. Section \ref{section:numer} summarizes the main results and the managerial insights of computational experiments. Section \ref{section:conc} concludes this work.
	
	\section{Literature review}\label{section:review}
	This paper explores the coordination strategy of trip matching and task assignment within the ride-hailing platform. We categorize the related literature into three main streams: (1) vehicle-based mobile crowdsensing, with an emphasis on the use of taxis, (2) coordination of ride-hailing services with delivery or other tasks, and (3) auction-based mechanisms for resource allocation in transportation.
	
	\subsection{Taxi-based mobile crowdsensing}
	Vehicles are widely used for mobile crowdsourced tasks, especially urban data collection, and crowdsourced delivery, for their advantages in high mobility, scattered distribution, and low cost. The mobile crowdsensing tasks could be performed either by \textit{active data collectors} who are primarily dedicated to the sensing tasks or \textit{passive data collectors} who record the data while conducting other tasks. There is a large body of studies addressing the data collection, completion, assimilation, and training problems of active data collectors for mobile sensing. One may refer to \citep{JI2023104874} for a recent review on this topic. 
	
	Our work focuses exclusively on ride-hailing taxis, which fall in the group of passive data collectors. In contrast to the active data collectors, the routes and trajectories of the taxis could not be planned or coordinated by a centralized operator, i.e., the sensing power is sensitive to the driving and cruising behavior of taxi drivers\citep{o2019quantifying}. Some unpopular streets or areas are rarely or even never visited by taxis. Nonetheless, providing information on the spatial and temporal demand to the drivers will affect their behaviors and increase social welfare \citep{zhang2020learning}. Further explorations on the mobility behavior of taxi drivers may refer to \cite{wang2019mobile}. 
	
	Instead of controlling the trajectories of taxis directly, for which the effect is doubted, a more realizable solution is to develop incentive mechanisms to motivate the drivers to cover the ideal areas \citep{JI2023104874}. \citet{masutani2015sensing} develop a centralized decision framework that recommends routes to a subset of vehicles that maximize the sensing quality. \citet{fan2019joint} applies the reverse combinatorial auction to motivate taxi drivers to perform the sensing tasks following the scheduled trajectories. \citet{xu2019ilocus} and \citet{chen2020pas} navigate the idle taxis mounted with sensors to achieve desirable sensing quality. A comparable incentive mechanism is proposed to motivate drivers to follow the navigation. \citet{asprone2021vehicular} calculates a set of $\varepsilon$-minimum routes for partial vehicles such that sensing coverage is maximized, in which the cost of any $\varepsilon$-minimum route is less than that of the minimum cost route multiplied by a predefined parameter $\varepsilon$. 
	
	The existing studies assume that a taxi can only be dedicated to either conducting sensing tasks or serving passengers. However, this assumption does not accurately conform to reality and tends to overestimate the success rate of drivers in undertaking sensing tasks. It overlooks the opportunity costs involved in conducting regular work, which could affect drivers' willingness to undertake sensing tasks. This observation motivates us to develop an operational strategy that integrates sensing tasks, though small in quantity compared to ride-hailing orders, into the regular workflow of drivers. In this approach, we assume the drivers are rational and choose sensing tasks based on their net utility. To our knowledge, this approach is novel in the literature on taxi-based mobile sensing.
	
	\subsection{Coordination of multiple tasks in the taxi network}
	The literature on the coordination of multiple heterogeneous crowd-sourcing tasks within the taxi/ride-hailing network is new and fast-growing. An early attempt is the Share-a-ride problem (SARP) in which a mixed integer linear program is formulated to plan the set of sharing routes for passengers and parcels using the same fleet of taxis \citep{li2014share}. It is noted that people and freight delivery differ in many aspects such as price and service level. Nevertheless, the passengers should be prioritized in the person-freight shared taxi network. \citeauthor{chen2017using} propose a heuristic strategy for collecting e-commerce reverse flow using taxis. For the sake of reducing environmental impact and promoting the revenue of drivers, only taxis with passengers on board are allowed to collect returned goods. The freight delivery with the mixed fleet of regular and occasional vehicles is studied as the Vehicle Routing Problem with Occasional Drivers (VRPOD) by \cite{archetti2016vehicle}. In such a problem, the number of regular drivers is unlimited but they may request high cost. Vice versa for the occasional drivers. Extensions to online and bundle delivery problems are made by \cite{archetti2021online} and \cite{mancini2022bundle}, respectively. There is also a vast of studies using the auction-based approach to commission delivery tasks to shippers (\cite{zou2023designing} for example), their technical details will be reviewed in the next section. Despite its limitation in scalability \citep{qi2018shared}, the potential economic benefit and operational flexibilities of the crowdsourced delivery make it particularly attractive to shippers. For the state-of-the-art joint person and freight delivery with taxis, one may refer to \cite{alnaggar2021crowdsourced}.
	
	In addition to the integrated operation strategies for multiple crowd-sourcing tasks in the taxi network, one may also be curious about the business modes of their alliance. Unfortunately, no study is available on how different types of ride-hailing tasks are coordinated by the same operator. However, a similar business mode in which different ride-hailing platforms for the same tasks are coordinated within one integrator has been studied by a handful of researchers. \citet{zhou2022competition} first studies the third-party platform integration in ride-sourcing markets and assumes the integrator could directly control the vehicle-ride request matching process, despite which ride-hailing platform a vehicle is affiliated to. Thus, the integrator is responsible for maximizing the number of realized hailing orders and social welfare respecting the equilibrium among ride-hailing platforms. \citet{li2022allocation} observe that the ride-hailing platforms may create a so-called `artificial scarcity' market phenomenon by sacrificing the order completion rate for high profit, which is caused by the inappropriate pricing strategy of the integrator. They propose a Stackelberg game model for pricing to remove artificial scarcity. \citet{bao2023mathematical} address a ride-hailing order assignment approach for the third-party integrator aiming at minimizing the waiting time of passengers. 
	
	Despite the abundance of literature on the market environment, business operation, and stakeholders' decision-making problems of the ride-hailing platform \citep{wang2019ridesourcing}, there is limited study on the coordination of taxis for heterogeneous tasks by one operator. 
	Before stepping into the technical details, the ride-hailing platforms are curious about problems such as whether will they benefit from the crowd-sourced tasks, how the crowd-sourced tasks are released dynamically, how the crowd-sourced tasks and ride-hailing orders are organized to guarantee the service level for passengers, how the targeted drivers are selected, how to commission the tasks to the appropriate drivers to avoid reluctance, etc. Our work will answer this operational problem when coordinating the sensing tasks and ride-hailing orders in the same taxi network. 
	\subsection{Auction-based task allocation in mobility}
	Auction-based mechanism designs have been extensively used in various mobility task allocation problems such as shared rides matching \citep{YAN2021102351, BIAN202077}, parcel delivery \citep{Yafei2022auction} and mobile sensing task commission \citep{fan2019joint}. The main motivation for using an auction-based assignment approach is to ensure the welfare of all stakeholders involved in the trading process. The auction approaches could be broadly classified into: (i) combinatorial auction, in which a bidder is allowed to submit price for multiple tasks in each round of auction \citep{HAMMAMI2021204}; (ii) sequential auction, in which the targets are released dynamically over time and are auctioned sequentially \citep{MOCHON2022PPP, XIANG2023advanced}; and (iii) double auction, in which both the supply and demand sides are allowed to make offers and the auction is settled by the auctioneer \citep{Xu2017truthful, Li2020pricing}. There are also research works that fall into the intersection of two groups, for example, \citet{karamanis2020assignment} in using the combinatorial double auction for solving the ride-sharing assignment and pricing problems. 
	%
	
	While numerous studies address task allocation issues in mobility services, the auctioning of mobile sensing tasks to ride-hailing drivers remains largely unexplored. The distinctive characteristics of these tasks, such as their sparse spatial distribution, flexible time requirements, and limited data volume, necessitate tailored approaches when implementing auctions. Moreover, concerns regarding budget deficits from both data users and ride-hailing platform operators in adopting taxi-based mobile sensing should also be addressed. Otherwise, data users may opt to utilize their dedicated vehicles for sensing to mitigate costs. To tackle these challenges, we propose a novel auction-based mechanism for assigning mobile sensing tasks that excludes non-economical matching combinations and refines the payment rule.
	
	\section{Coordination of ride-hailing and mobile sensing tasks}\label{section:model}
	
	This section delves into the coordination of ride-hailing orders and mobile sensing tasks within a single ride-hailing taxi fleet. The platform operates with two types of taxis: Type-A vehicles exclusively handle trip requests, while Type-B vehicles are equipped with sensing devices and can accommodate both trip requests and mobile sensing tasks. Denoting the sets of Type-A and Type-B vehicles as $\mathcal{\overline{D}}$ and $\mathcal{\widetilde{D}}$ respectively, it's worth noting that the fleet size of Type-B vehicles is typically smaller due to the higher costs associated with purchasing and installing sensing devices. The union of these sets, $\mathcal{D} = \mathcal{\overline{D}}\cup \mathcal{\widetilde{D}}$ represents the entirety of taxis registered with the ride-hailing platform.
	
	Consider a study area comprising various locations earmarked for monitoring, each corresponding to a pending sensing task. Given that the completion rate of sensing tasks hinges on the availability of active Type-B drivers, employing an auction-based task assignment method becomes favorable to motivate Type-B drivers to undertake sensing tasks. This incentivization is driven by the potential for Type-B drivers to earn a higher expected reward from sensing tasks compared to trip requests. The overall payoff across all tasks is governed by the budget $\Omega$. To distinguish between passenger delivery and sensing tasks, we segment the service time of Type-B vehicles, denoted as $\mathcal{T}$, into multiple plan cycles. Each cycle $T$ comprises three phases:
	(i) the ride-hailing task matching phase $T_1$, where ride-hailing tasks are assigned to both types of vehicles;
	(ii) the mobile sensing task bidding phase $T_2$, during which a Type-B vehicle failing to secure a ride-hailing task can bid on multiple sensing tasks; and
	(iii) the mobile sensing task assignment phase $T_3$, wherein tasks are allocated to Type-B vehicles through the auction-based approach.
	In this system, both Type-A and Type-B vehicles enjoy equal priority in $T_1$, and the operation of Type-A vehicles remains unaffected by the sensing task assignment in $T_2$ and $T_3$. The following notations are consistently utilized throughout this article.
	
	\begin{table}[htbp]
		\centering
		\scriptsize
		\caption{Key parameters and variables}
		\begin{tabularx}{\textwidth}{p{0.08\textwidth}X}
			\toprule[2pt]
			\multicolumn{2}{l}{Indices and sets} \\
			\midrule
			$\mathcal{D}_t$ & Set of available drivers in the $t$-th interval (including both the Type-A and Type-B drivers) \\
			$\mathcal{\widetilde{D}}_t$ & Set of available Type-B drivers in the $t$-th interval ($\mathcal{\widetilde{D}}_t \subseteq \mathcal{D}_t$)\\
			$\mathcal{R}_t$ & Set of available riders in the $t$-th interval \\
			$\mathcal{K}_t$ & Set of task requests released in the $t$-th interval \\
			$A$ & Set of region in geographic network \\
			\midrule
			\multicolumn{2}{l}{Parameters and constants} \\  
			\midrule
			$p_{r}^{s}$ & Base price charged from a matched rider \\
			$L_0^{s}$ & Travel distance covered by the initial fare \\
			$t_0^{s}$ & Travel time covered by the initial fare \\
			$l_0$ & Average service distance of all trip requests \\
			$\beta_1$ & Incremental cost per unit travel distance when the distance exceeds $L_0^{s}$ \\
			$\beta_2$ & Incremental cost per unit time when the time exceeds $t_0^{s}$ \\
			$\overline{V}$ & Average vehicle speed \\
			$L_{ub}$ & Maximum pick-up distance \\
			$\alpha$ & Driver’s expected payoff per unit travel distance \\
			$\overline{b}$ & The upper bound of bid value \\
			$\underline{b}$ & The lower bound of bid value \\
			$\mu$ & Coefficient for calculating opportunity cost in the task assignment \\
			$c_q$ & Operating cost per unit travel distance when using the dedicated vehicle \\
			$C$ & Base reward for a task request \\
			$\Omega$ & Overall budget for sensing tasks \\
			\midrule
			\multicolumn{2}{l}{Decision variables} \\
			\midrule
			$x_{dr}^{t}$ &  A binary variable equals to 1 if driver $d \in \mathcal{D}_t$ is matched with a rider $r \in \mathcal{R}_t$ are matched in the $t$-th interval, and 0 otherwise \\
			$x_{dk}^{t}$ &  A binary variable equals to 1 if driver $d \in \mathcal{\widetilde{D}}_t$ is matched with a sensing task $k \in \mathcal{K}_t$ in $t$-th interval, and 0 otherwise \\
			\bottomrule[2pt]
		\end{tabularx}
		\label{tab:my_label}
	\end{table}

	During phase $T_2$, the ride-hailing platform releases a specific number of sensing tasks, determined by the available number of drivers and the remaining budget. Each task forms an auction venue. Eligible drivers, including those Type-B drivers who shift to the unoccupied state following the completion of a sensing task or trip order and those who fail to match any trip order, are allowed to bid on multiple tasks. As the bidding concludes, the ride-hailing platform gathers bids from each auction venue. Subsequently, the platform is responsible for determining the winners and payoffs for each sensing task through a combinatorial auction process.

	\paragraph{Opportunity cost of drivers} In addition to the direct rewards from sensing tasks and trip requests, drivers often factor in opportunity costs when deciding whether to undertake a trip request or MST. These opportunity costs can be roughly estimated by considering a driver's expected waiting time at the destination for a new trip request or MST. Typically, drivers are reluctant to accept remote requests, especially when pickups and drop-offs are in suburban or rural areas, due to the extended time required to search for subsequent MSTs or trip requests in regions with lower travel demand. The expected waiting time for the next trip request depends on both the supply and demand at the destination, encompassing the number of available vehicles and trip requests. These values can be estimated from historical data or calculated on the fly.
	
	Let $n_{aT}$ denote the estimated number of trip requests in the region $a$ during plan cycle $T$, $n_{aT}^{taxi}$ denote the estimated count of available drivers in the same region and plan cycle, and $T_0$ indicate the duration of each cycle. Thus, $\frac{n_{aT}}{n_{aT}^{taxi}+1}$ signifies the anticipated number of trip requests allocated to each driver if they head towards region $a$ during plan cycle $T$. The expression $\frac{n_{aT}}{T_0 (n_{aT}^{taxi}+1)}$ represents the average number of trip requests matched per driver per unit of time. Consequently, its inverse $\frac{T_0 (n_{aT}^{taxi}+1)}{n_{aT}}$ corresponds to the duration between completing one trip order and being matched with the next trip order in the region $a$, effectively denoting the estimated cruising time in that region.  However, if $n_{aT}$ is exceedingly small, $\frac{T_0 (n_{aT}^{taxi}+1)}{n_{aT}}$ can become extremely large. To avoid such a situation, we may impose an upper bound $f_m$ on the opportunity cost.
	
	Therefore, the opportunity cost for trip requests is calculated by
	\begin{equation}\label{opptm}
		f(r_d) = \min \left\{f_m, \quad \alpha \overline{V} \cdot \frac{T_0 (n_{a_{r_d}T}^{taxi}+1)}{2 n_{a_{r_d}T}}\right\}, 
	\end{equation}
	where $\alpha$ is driver's payoff per unit travel distance, $\overline{V}$ is the average vehicle speed in urban areas, and $a_{r_d}$ is the area where the destination $r_d$ is located.
	
	The procedure for calculating the opportunity cost of accepting a sensing task is as follows: When a Type-B driver accepts an MST, she opts out of the opportunity to pick up any potential riders near her current location. Given that MSTs typically occur in remote areas, we can simplify the determination of opportunity cost by assuming it relies solely on the distance between the driver's current location and the point of interest (POI) associated with the MST. Therefore, the opportunity cost $g(l)$ can be expressed as a monotonically increasing function of this distance:
	\begin{equation}\label{oppta}
		g(l) = \max \left\{0, \quad\mu(l - l_0)\right\}\quad \mu>0, 
	\end{equation}
	where $l$ and $l_0$ denote the actual service distance of the trip request and the average service distance of all trip requests, respectively.
	\subsection{Trip matching problem}
	
	Consider a driver-rider pair $(d, r)$. When matched by the ride-hailing platform, the fare charged to the rider comprises several components: the initial hailing fare, additional distance- and time-based fares. This cost can be expressed as:
	\begin{equation}
		p_r = p_r^{s}+\max\left\{0, \beta_1(L_r - L_0^{s})\right\}+\max\left\{0, \beta_2(t_r - t_0^{s})\right\},
	\end{equation} where $p_r^{s}$ represents the initial hailing fare set by the ride-hailing platform. The second term accounts for any additional distance-based fare when the actual distance traveled $L_r$ exceeds the base distance $L_0^{s}$.
	The third term represents any additional time-based fare when the actual travel time $t_r$ exceeds the base time $t_0^{s}$. $L_0^{s}$ and $t_0^{s}$ denote the base distance and time covered by the base taxi fare, respectively.
	
	On the driver's side, the earning to pick up the rider $r$ encompasses both the distance-based fare and the additional subsidy equal to the opportunity cost. Such subsidies encourage drivers to take remote trip requests, thereby increasing the number of successful matches and regulating the distribution of vehicles across different regions \citep{you2023order}.
	
	\begin{equation}
		p_{dr} = \alpha L_r + f(r_d).
	\end{equation} Here, $\alpha L_r$ represents the distance-based cost incurred by the driver for traveling the distance $L_r$ to pick up the rider. $f(r_d)$ denotes the subsidy associated with opportunity cost incurred by the driver's decision to accept the ride request from the specific rider $r_d$.
	%
	%
	%
	
	Following the idea of \cite{agatz2011dynamic}, our objective function is minimizing the total pickup distance while maximizing the number of matched driver-rider pairs. These objectives can also be regarded as maximizing the total pickup distance saving. Let $L_{dr}$ represent the pickup distance in time slot $t \in T$. The saved distance for a driver-rider pair is calculated as $\sigma_{dr}^{t} = \max_{r\in \mathcal{R}_t} \left\{L_{dr}\right\} - L_{dr}$. We formulate the trip-matching problem as follows:
	
	[TM-P1]
	\begin{align}
		\max &\sum_{d\in\mathcal{D}_t}\sum_{r\in\mathcal{R}_t}\sigma_{dr}^{t}x_{dr}^{t} \label{tmobj} \\
		\hbox{s.t.}~~ &\sum_{d\in\mathcal{D}_t}x_{dr}^{t}\leq 1, \quad\forall r\in\mathcal{R}_t \label{tmcon1} \\
		&\sum_{r\in\mathcal{R}_t}x_{dr}^{t}\leq 1, \quad\forall d\in\mathcal{D}_t \label{tmcon2} \\
		&(p_r^{t} - p_{dr}^{t})x_{dr}^{t} \geq 0, \quad \forall d\in\mathcal{D}_t, r\in\mathcal{R}_t \label{tmcon3} \\
		&\sigma_{dr}^{t} = \max_{r\in \mathcal{R}_t} \left\{L_{dr}\right\} - L_{dr}, \quad \forall d\in\mathcal{D}_t, r\in\mathcal{R}_t \label{tmcon4} \\
		&L_{dr}x_{dr}^{t} \leq L_{ub}, \quad \forall d\in\mathcal{D}_t, r\in\mathcal{R}_t \label{tmcon5} \\
		&x_{dr}^{t} \in \left\{0, 1\right\}, \quad \forall d\in\mathcal{D}_t, r\in\mathcal{R}_t \label{tmcon6}
	\end{align}
	Constraints \eqref{tmcon1} and \eqref{tmcon2} ensure that each driver and rider can be assigned only once. Constraint \eqref{tmcon3} excludes assignments resulting in negative revenues, while Constraint \eqref{tmcon5} prevents assigning riders to drivers who are excessively faraway from them.
	
	The trip-matching problem can be viewed as a bipartite graph-matching problem, noting Constraints \eqref{tmcon3}-\eqref{tmcon5} can be calculated beforehand. This problem can be efficiently solved using the Kuhn-Munkres (KM) algorithm, which finds the maximum weight perfect matching on the bipartite graph $G(\mathcal{N}_t, E_t)$ representing drivers, riders, and their potential matches. In this graph, the vertices $\mathcal{N}_t$ comprise the set of drivers $\mathcal{D}_t$ and riders $\mathcal{R}_t$, while the edges $E_t$ represent potential matches between drivers and riders, with weights corresponding to the cost $\sigma{dr}^{t}$ calculated by Equation \eqref{tmcon4}.
	
	To calculate the optimal matching, we initially eliminate the infeasible edges. Each vertex is then assigned an initial label: for one group of vertices, the labels are set to their maximum potential weights among all possible matches, while for the other group, the labels are set to 0. These labels are iteratively updated to identify augmenting paths in the bipartite graph. The algorithm terminates when no new matches are available. The KM algorithm is outlined in detail in Algorithm \ref{algorithm1}.

	\begin{algorithm}[h]
		\caption{Kuhn-Munkres Algorithm}\label{algorithm1}
		\KwIn{The set of drivers $\mathcal{D}_t$, the set of riders $\mathcal{R}_t$, weight matrix $W$}
		\KwOut{Optimal perfect matching}
		Construct a square matrix $W$ by adding virtual drivers and riders\;
		Initialize a match $M$ and labels for the drivers $l_x$ and the riders $l_y$ \;
		\For{$u \in \mathcal{D}_t$}{
			\While{{\bf True}}{
				Reset all vertices to unvisited state\;
				Find the augment path $p(u)$ for $u$\;
				\If{\textnormal{no augment path exists}}{
					{\bf break}\;
				}{
					\For{$v\in \mathcal{R}_t$}{
						\If{\textnormal{$v$ not visited} \enspace{\bf and}\enspace $l_x[u]+l_y[v]==W[u,v]$}{
							Find a augment path $p(v)$ for $v$\;
							\If{\textnormal{$v$ is not matched} {\bf or} \textnormal{$p(v)$ exists}}{
								Augment path $p(u)\leftarrow (u,v)$\;
								$M \leftarrow (u,v)$\;
							}
						}
					}
				}
				Find the tuple $(u,v)$ that $u$ is visited but $v$ is not\;
				Update each visited vertex's label with minimum $\Delta = l_x[u]+l_y[v]-W[u,v]$\;
			}
		}
	\end{algorithm}
	
	The variable $\sigma_{dr}^{t}$ may be dropped by reformulating the trip matching problem. For any rider $r$ who fails to match any driver, we may impose a penalty term on the platform's revenue. This term is parameterized by $M_k= \max_{r\in \mathcal{R}_t} \left\{L_{dr}\right\}$, which quantifies the system's (maximum) loss on not serving rider $r$. Then we may reach the following trip matching problem.
	
	[TM-P2]
	\begin{align}
		\min &=\sum_{d\in\mathcal{D}_t}\sum_{r\in\mathcal{R}_t} L_{dr}x_{dr}^{t} + \sum_{r\in\mathcal{R}_t}M_k(1 - \sum_{d\in\mathcal{\widetilde{D}}_t}x_{dr}^{t}) \label{ta3obj} \\
		\hbox{s.t.}~~&\text{Eqs.}\eqref{tmcon1}-\eqref{tmcon3}, \eqref{tmcon5},  \eqref{tmcon6} 
	\end{align}
	We note [TM-P1] and [TM-P2] result in exactly the same solutions.
	
	\subsection{The primal VCG-based MST assignment (VCG-MST) mechanism}
	The primary motivation for data users to utilize Type-B vehicles for MSTs lies in reducing their costs associated with purchasing and maintaining dedicated sensing vehicles. Hence, the objective of the MST assignment problem is to maximize savings on expenses related to sensing tasks.
	
	Conversely, a driver $d$ interested in bidding for a MST $k$ may anticipate a higher payoff compared to a hailing order covering the same distance as the MST. In the $t$-th interval, we define the \textit{hidden valuation} as $d$'s expected payoff for a hailing order with a distance of $l_{dk}$, calculated by:
	\begin{equation}
		\label{vdeq}
		\underline{v}_{dk}^t=\alpha l_{dk}+g(l_{dk})=
		\begin{cases}
			\alpha l_{dk} & l_{dk}\leq l_0 \\
			\alpha l_0 + (\alpha+\mu)(l_{dk}-l_0) & l_{dk}> l_0
		\end{cases}
	\end{equation} 
	where $l_0$ denotes the average service distance of all trip requests. 
	
	\begin{remark}
		In this study, we regard the hidden valuation as a driver's estimation of payoff to a MST. If $l_{dk}\leq l_0$, the task is not far from the driver, and the driver's expected payoff from the MST can be calculated similarly as a trip request. However, when $l_{dk} > l_0$, the driver may need to drive away from the urban area to perform the sensing tasks, requiring additional compensation for the inability to serve more passengers. This observation motivate us to add this compensation $\mu(l_{dk}-l_0)$ to the driver's valuation.
	\end{remark}
	
	\paragraph{Winner selection} 
	As noted at the beginning of this section, the MSTs are allocated to Type-B drivers through a series of auction venues, each corresponding to a planning cycle. The platform releases a set of MSTs $\mathcal{K}_T$ at the beginning of the task assignment phase $T_2$. A driver $d\in \mathcal{\widetilde{D}}_t$ has the option to submit bids for a limited number of MSTs at the same unit price. In this study, any valid bid $\mathscr{B}_{dk}^t$ must satisfy the following inequality:
	\begin{equation}\label{bid_lbub}
		\alpha \leq \underline{b} \leq \mathscr{B}_{dk}^t \leq \overline{b}
	\end{equation}
	where $\underline{b}$ and $\overline{b}$ denote the lower and upper bounds of the unit price per distance, respectively. Let $\alpha$ represent the expected payoff per unit distance for serving a passenger, and $c_q$ denote the unit cost of a dedicated sensing vehicle for completing a task. It is evident that $\underline{b}\geq \alpha$ and $\overline{b}\leq c_q$. If driver $d$ opts not to bid for an MST $k$, we set $\mathscr{B}_{dk}^{t}=+\infty$. It's important to note that all drivers independently submit their bids. Therefore, driver $d$'s stated valuation for MST $k$ is calculated as follows:
	\begin{equation}\label{v_state}
		\tilde{v}^t_{dk} = C+\mathscr{B}^t_{dk} l_{dk}
	\end{equation} where $C$ is the driver's base payoff on completing an MST.
	
	We may calculate the intersection of the state valuation and the hidden valuation. Clearly, $l^*=\frac{C+\mu l_0}{\alpha+\mu-\mathscr{B}_{dk}^t}$. When driver $d$'s bid $\mathscr{B}_{dk}^t<\alpha+\mu$ and the distance $l_{dk} < l^*$, the stated valuation must be greater than the hidden valuation, which means that driver $d$ can earn at least as much as expected and doesn't need any compensation. Figure \ref{fig:comp} illustrates two relationship of the hidden valuation (\eqref{vdeq}) and the stated valuation \eqref{v_state} under different parameter settings.
	
	\begin{figure}[htbp]
		\centering
		\includegraphics[width=\textwidth]{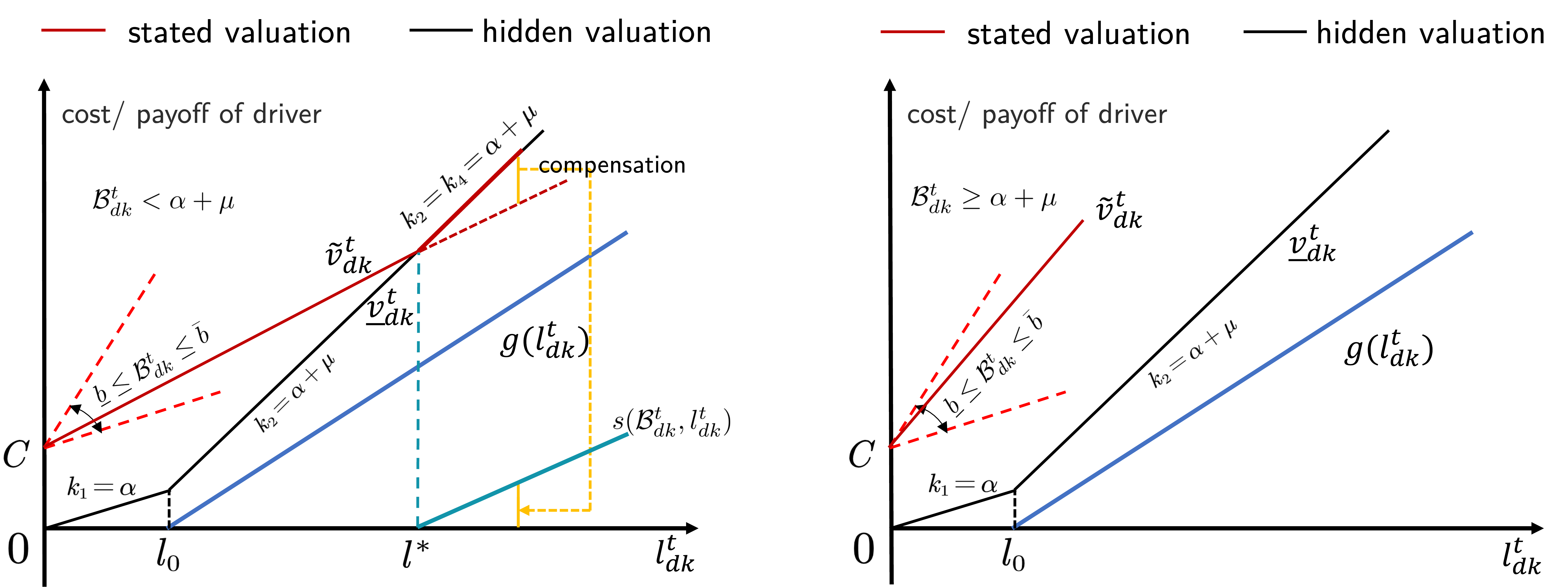}
		\caption{A driver's stated valuation and hidden valuation to a task. Left: When $\mathscr{B}_{dk}^t<\alpha+\mu$ and $l_{dk} \geq l^*=\frac{C+\mu l_0}{\alpha+\mu-\mathscr{B}_{dk}^t}$, $\tilde{v}_{dk}^t < \underline{v}_{dk}^t$. The driver's valuation should be adjusted by $s(\mathscr{B}_{dk}^t, l_{dk})$. Right: $\mathscr{B}_{dk}^t\geq \alpha+\mu$, $\tilde{v}_{dk}^t \geq \underline{v}_{dk}^t$. The stated valuation is always above the hidden valuation.}
		\label{fig:comp}
	\end{figure}
	However, the state valuation is less than the driver $d$'s hidden valuation for the MST when $\mathscr{B}_{dk}^t<\alpha+\mu$ and $l_{dk} \geq l^*$. For a rational driver, the final earning for any MST should always be no less than the hidden valuation $\underline{v}^t_{dk}$. By simple algebraic analysis, we have:
	\begin{equation}
		\begin{cases}
			\tilde{v}_{dk}^t < \underline{v}_{dk}^t, & \text{ if } \mathscr{B}_{dk}^t<\alpha+\mu \text{ and } l_{dk}> \frac{C+\mu l_0}{\alpha+\mu-\mathscr{B}_{dk}^t}\\ 
			\tilde{v}_{dk}^t \geq  \underline{v}_{dk}^t,  & \text{ otherwise. }
		\end{cases}
	\end{equation}
	Therefore, we introduce a compensating function $s(\mathscr{B}_{dk}^t, l_{dk}^t)$ to adjust the driver's valuation. $s(\mathscr{B}_{dk}^t, l_{dk})$ is given by $\max\{\underline{v}_{dk}^t -\tilde{v}_{dk}^t , 0\}$. We have 
	\begin{equation}\label{compfunc}
		s(\mathscr{B}_{dk}^t, l_{dk}) =
		\begin{cases}
			\alpha l_{dk}+\mu(l_{dk}-l_0) - \mathscr{B}_{dk}^t l_{dk}, &\text{ if } \mathscr{B}_{dk}^t<\alpha+\mu \text{ and } l_{dk}> \frac{C+\mu l_0}{\alpha+\mu-\mathscr{B}_{dk}^t}\\ 
			0, & \text{otherwise.}
		\end{cases}
	\end{equation}
	
	With this additional term, driver $d$'s adjusted valuation for task $k$ now becomes:
	\begin{equation}\label{tvdeq}
		v_{dk}^{t}= \tilde{v}_{dk}^t + s(\mathscr{B}_{dk}^t, l_{dk}) =C + \mathscr{B}_{d}^{t}l_{dk} + s(\mathscr{B}_{d}^{t}, l_{dk})
	\end{equation}
	where $\mathscr{B}_{d}^{t}$ is the bid of driver $d$ for any task request in $t$'s decision cycle.
	
	In the winner selection phase $T_3\in T$, a Type-B driver may be assigned either a hailing order or an MST. The following inequality ensured that the driver could handle at most one task or order simultaneously:
	\begin{equation}\label{ta1con3}
		x_{dr}^{t} \leq 1-x_{dk}^{t}, \quad\forall d,r,k \in\mathcal{\widetilde{D}}_t,\mathcal{R}_t,\mathcal{K}_T; \quad t\in T.
	\end{equation}
	Here, $\mathcal{\widetilde{D}}_t$ represents the set of available Type-B vehicles in time slot $t$, $\mathcal{R}_t$ denotes the set of available riders in time slot $t$, and $\mathcal{K}_T$ indicates the set of MSTs released in time slot $T$.
	
	In this study, the \textit{sensing cost saving} $\delta_{dk}^{t}$ quantifies the cost reduction achieved by assigning the MST to a Type-B driver instead of dispatching a dedicated vehicle. This cost reduction is computed as follows:
	\begin{equation}\label{ta1con4}
		\delta_{dk}^{t} = c_q \cdot l_{qk} - v_{dk}^{t}.
	\end{equation}
	Here, $c_q$ represents the unit cost per kilometer when using a dedicated vehicle, while $l_{qk}$ denotes the distance from the nearest depot of dedicated sensing vehicles to the MST $k$, which can be approximated by the shortest distance between them.
	
	An auction venue will result in a winner if at least one driver submits a valid bid. In mathematical terms, this condition can be expressed as:
	\begin{equation}\label{ta1con4_1}
		\tau_{k}^{t}=1, \quad \text{if} \quad \exists \delta_{dk}^{t} > -\infty, \enspace \forall k\in \mathcal{K}_T,
	\end{equation}
	where $\tau_{k}^{t}$ is an indicator variable that equals 1 if the MST $k$ is assigned to any driver, and 0 otherwise.
	
	We present the following integer linear programming model to select the winners of the MSTs for each round $t$:
	
	[MST-P1]
	\begin{align}
		\max &\quad\Pi=\sum_{k\in \mathcal{K}_T}\sum_{d\in \mathcal{\widetilde{D}}_t} \delta_{dk}^{t}x_{dk}^{t} \label{ta1obj} \\
		\hbox{s.t.}~~&\text{Eqs.}\eqref{tvdeq}-\eqref{ta1con4_1} \nonumber \\
		&\sum_{k\in\mathcal{K}_T}x_{dk}^{t}\leq 1, \quad\forall d\in \mathcal{\widetilde{D}}_t \label{ta1con1} \\
		& \sum_{d\in\mathcal{\widetilde{D}}_t}x_{dk}^{t}\leq 1, \quad\forall k\in \mathcal{K}_T \label{ta1con2} \\
		&\sum_{d\in\mathcal{\widetilde{D}}_t}x_{dk}^{t}\geq \tau_{k}^{t}, \quad\forall k\in \mathcal{K}_T \label{ta1con2_1} \\
		& x_{dk}^{t}, x_{dr}^{t},\tau_{k}^{t} \in \left\{0,1\right\} \label{ta1con10}
	\end{align} 
	In this formulation, Constraints \eqref{ta1con1} and \eqref{ta1con2} ensure that each driver or MST can be matched only once. Constraint \eqref{ta1con2_1} guarantees that any task request will be assigned when there are valid bidders for it.
	
	As previously discussed, Type-B vehicles are considered to be `redundant' vehicles if they fail to match any rider in $T_1$. These vehicles are eligible to participate in both the trip matching and MST assignment problems in $T_3$. To motivate Type-B drivers, the ride-hailing platform may prioritize MSTs in ${T_3}$. In other words, the MST assignment problem is solved first. Any unoccupied Type-A and Type-B drivers are subsequently matched to ride orders by addressing the trip matching problem, while those who win the auction are filtered out.
	
	\paragraph{Payment determination} After assigning task requests to drivers, the next problem is to determine a reasonable payoff for each driver, which is calculated by the ride-hailing platform. A straightforward payment rule derives from the one-sided VCG mechanism. In the VCG-MST mechanism, the payoff of each driver $d$ is determined by:
	\begin{equation}\label{vcgrev}
		p_{d}^{t} = v_{dk}^{t} + (\Pi(\mathcal{\widetilde{D}}_t) - \Pi(\mathcal{\widetilde{D}}_t \backslash \left\{d\right\})) \quad \forall d \in \mathcal{\widetilde{D}}_t
	\end{equation}
	where $\Pi(\mathcal{\widetilde{D}}_t)$ and $\Pi(\mathcal{\widetilde{D}}_t \backslash \left\{d\right\})$ represent the optimal objective function value with and without driver $d$. A driver's utility $u_d^{t}$ is calculated as the difference between the payoff and true valuation:
	\begin{equation}\label{vcguti}
		u_d^{t} = p_{d}^{t} - v_{dk}^{t}
	\end{equation}
	The aforementioned winner selection and payment determination process are summarized in Algorithm \ref{algorithm2}.
	
	\begin{algorithm}[h]
		\caption{VCG-MST assignment mechanism}\label{algorithm2}
		\KwIn{The driver set $\mathcal{D}_t$, the task request set $\mathcal{K}_{T}$, the rider set $\mathcal{R}_t$, sensing cost saving matrix $W$}
		\KwOut{Trip matching result $X_{\mathscr{D}\mathscr{R}}$, MST assignment result $X_{\mathscr{W}\mathscr{K}}$, vector of payments to MSTs $\textbf{p}$}
		Solve the MST assignment model to obtain the optimal solution $X_{\mathscr{W}\mathscr{K}}$, the set of winners $X_{\mathscr{W}}$ and the set of selected task requests $X_{\mathscr{K}}$\;
		Calculate the optimal objective function value $\Pi(\mathcal{\widetilde{D}}_t)$\;
		Solve the trip matching model to obtain the optimal solution $X_{\mathscr{D}\mathscr{R}}$, the set of matched drivers $X_{\mathscr{D}}$ and the set of matched riders $X_{\mathscr{R}}$\;
		\For{$(d,r)\in X_{\mathscr{D}\mathscr{R}}$}
		{\If{$d\in X_{\mathscr{W}}$}
			{Search for another idle driver $d_2$ who satisfies $d_2 = \argmin_{j\in \mathcal{D}_t\backslash(X_{\mathscr{W}}\cup X_{\mathscr{D}})} \left\{L_{jr}\enspace\big | \enspace L_{jr}\leq L_{ub}\right\}$\;
				\eIf{$\exists d_2 \in \mathcal{D}_t\backslash(X_{\mathscr{W}}\cup X_{\mathscr{D}})$}
				{replacing the incumbent driver $d$ for rider $r$ by $d_2$\;}
				{$X_{\mathscr{D}\mathscr{R}} \leftarrow X_{\mathscr{D}\mathscr{R}}\backslash \left\{(d,r)\right\}$\;
					$X_{\mathscr{D}} \leftarrow X_{\mathscr{D}}\backslash \left\{d\right\}$\;
					$X_{\mathscr{R}} \leftarrow X_{\mathscr{R}}\backslash \left\{r\right\}$\;}
			}
		}
		\For{$d\in X_{\mathscr{W}}$}
		{Remove the row for driver $d$ from the matrix $W$\;
			Re-calibrate the MST assignment model to obtain the optimal objective function value $\Pi(\mathcal{\widetilde{D}}_t \backslash \left\{d\right\})$\;
			Calculate the driver $d$'s payoff $p_d^{t}=v_{dk}^{t}+(\Pi(\mathcal{\widetilde{D}}_t) - \Pi(\mathcal{\widetilde{D}}_t \backslash \left\{d\right\}))$\;
		}
		\textbf{Return} \quad $X_{\mathscr{D}\mathscr{R}}, X_{\mathscr{W}\mathscr{K}}, \textbf{p}$
	\end{algorithm}
	
	The VCG-MST mechanism is proved to satisfy favorable economic properties such as Individual Rationality, Allocative Efficiency (AE), and Incentive Compatibility.
	\begin{proposition}{\bf Individual Rationality}
		Any driver who bids an MST would not get negative utility.
	\end{proposition}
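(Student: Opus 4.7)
The plan is to show $u_d^t \ge 0$ for every driver $d$ who submits a valid bid, splitting on whether $d$ wins the auction. A losing bidder receives no task, so by convention $v_{dk}^t=0$; moreover her decision variables vanish in any optimum of [MST-P1], so restricting that optimum to $\mathcal{\widetilde{D}}_t\setminus\{d\}$ is feasible for the reduced problem with the same objective, giving $\Pi(\mathcal{\widetilde{D}}_t)=\Pi(\mathcal{\widetilde{D}}_t\setminus\{d\})$ and hence $u_d^t=p_d^t-v_{dk}^t=0$.

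The substantive case is a winning driver $d$ assigned to some task $k^{\star}$. Substituting the payment rule \eqref{vcgrev} into the utility definition \eqref{vcguti} produces the classical VCG cancellation
\begin{equation*}
u_d^t = \bigl(v_{dk^{\star}}^t + \Pi(\mathcal{\widetilde{D}}_t) - \Pi(\mathcal{\widetilde{D}}_t\setminus\{d\})\bigr) - v_{dk^{\star}}^t = \Pi(\mathcal{\widetilde{D}}_t) - \Pi(\mathcal{\widetilde{D}}_t\setminus\{d\}),
\end{equation*}
so IR reduces to the monotonicity lemma $\Pi(\mathcal{\widetilde{D}}_t)\ge\Pi(\mathcal{\widetilde{D}}_t\setminus\{d\})$. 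I would prove the lemma by a zero-extension: take any optimizer $X^{\star}$ of [MST-P1] on the reduced pool, and pad it to the full pool by setting $\hat{x}_{dk}^t=0$ for every $k\in\mathcal{K}_T$ while keeping the other entries intact. The one-matching caps \eqref{ta1con1}--\eqref{ta1con2} and the integrality condition \eqref{ta1con10} are inherited because appending an identically-zero row cannot violate any of them, and the full-problem objective of $\hat{X}$ equals $\Pi(\mathcal{\widetilde{D}}_t\setminus\{d\})$; maximizing over the full feasible region then yields $\Pi(\mathcal{\widetilde{D}}_t)\ge\Pi(\mathcal{\widetilde{D}}_t\setminus\{d\})$ as required.

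The main obstacle is the coupling constraint \eqref{ta1con2_1}, whose right-hand side $\tau_k^t$ is itself determined by the set of valid bids on $k$ and can therefore toggle from $0$ to $1$ when $d$ is re-inserted into the pool. Concretely, if $d$ is the sole bidder for some MST $k'$, then $\tau_{k'}^t=1$ in the full instance but $\tau_{k'}^t=0$ in the reduced one, and the zero-extension $\hat{x}_{dk'}^t=0$ violates \eqref{ta1con2_1} in the full problem. I would close this gap by adopting the standard VCG convention that $\tau_k^t$ is treated as fixed input data when computing both $\Pi(\mathcal{\widetilde{D}}_t)$ and $\Pi(\mathcal{\widetilde{D}}_t\setminus\{d\})$; under this convention the extension is automatically feasible, the monotonicity lemma holds, and IR follows in every case.
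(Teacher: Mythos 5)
Your argument follows the same route as the paper's: substitute \eqref{vcgrev} into \eqref{vcguti} so that $u_d^t$ collapses to the marginal contribution $\Pi(\mathcal{\widetilde{D}}_t)-\Pi(\mathcal{\widetilde{D}}_t\setminus\{d\})$, then argue this is nonnegative because enlarging the driver pool cannot decrease the optimum. The paper disposes of that monotonicity step in a single (rather garbled) sentence; your zero-extension of a reduced-pool optimizer is the correct way to actually prove it, and your separate treatment of losing bidders is a detail the paper omits entirely.

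Where you go beyond the paper is in noticing that constraint \eqref{ta1con2_1} obstructs the zero-extension, and you should be aware that this is a substantive problem rather than a technicality. Under the model as literally written, if $d$ is the \emph{sole} valid bidder for some task $k'$ with $\delta_{dk'}^t<0$, then \eqref{ta1con4_1} forces $\tau_{k'}^t=1$ and hence $x_{dk'}^t=1$ in the full problem, while the reduced problem simply has $\tau_{k'}^t=0$; one then gets $\Pi(\mathcal{\widetilde{D}}_t)-\Pi(\mathcal{\widetilde{D}}_t\setminus\{d\})=\delta_{dk'}^t<0$, so the proposition itself fails in that configuration. The paper's one-line proof silently assumes the very monotonicity your observation refutes. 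Your proposed repair, however, does not close the gap: declaring $\tau_k^t$ ``fixed input data'' at the full-instance values makes the reduced problem infeasible (or drives it to $-\infty$) whenever $d$ is a sole bidder, while fixing it at the reduced-instance values alters the winner-selection problem whose optimum $\Pi(\mathcal{\widetilde{D}}_t)$ you are trying to bound, so the conclusion no longer speaks about the mechanism as defined. An honest fix is to weaken \eqref{ta1con4_1} so that $\tau_k^t=1$ only when some bidder on $k$ has $\delta_{dk}^t\ge 0$ (forced assignments then never lower the objective), or to drop \eqref{ta1con2_1} altogether and let the maximization decide; under either modification your zero-extension argument goes through verbatim and the proof is complete.
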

	\begin{proof}
		From the Eq.\eqref{vcgrev} and Eq.\eqref{vcguti}, the driver's utility is $\Pi(\mathcal{\widetilde{D}}_t) - \Pi(\mathcal{\widetilde{D}}_t \backslash \left\{d\right\})$. If a driver $d$ participates in sensing activity, the optimal value of Eq.\eqref{ta1obj} is at least not less than that in Eq.\eqref{ta1obj}, then we have $\Pi(\mathcal{\widetilde{D}}_t) \geq \Pi(\mathcal{\widetilde{D}}_t \backslash \left\{d\right\})$, which leads to $u_d^{t}\geq 0$. This completes the proof.
	\end{proof}
	
	\begin{proposition}{\bf Allocative Efficiency}
		The VCG-MST auction mechanism for MST assignment is allocative efficiency.
	\end{proposition}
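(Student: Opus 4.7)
The plan is to unpack what allocative efficiency means in this context and then observe that the winner selection rule is, by construction, a maximiser of the relevant social welfare. In a one-sided VCG auction, an allocation is allocatively efficient if it maximises the sum of the participants' true valuations (equivalently, here, the total sensing cost saving delivered to the data user), over the feasible set of driver-task assignments. So the target is to exhibit the optimal solution $X_{\mathscr{W}\mathscr{K}}$ of [MST-P1] as the welfare-maximising allocation.

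First, I would argue that the objective \eqref{ta1obj} is the correct welfare measure. The quantity $\delta_{dk}^{t} = c_q \cdot l_{qk} - v_{dk}^{t}$ is, by \eqref{ta1con4}, the cost the data user avoids by dispatching driver $d$ to task $k$ instead of a dedicated vehicle, net of the driver's adjusted valuation $v_{dk}^{t}$ from \eqref{tvdeq}. Because $v_{dk}^{t}$ already subsumes both the stated bid value and the compensation term $s(\mathscr{B}_{dk}^{t},l_{dk})$, it represents the smallest transfer under which driver $d$ is willing to execute task $k$, and hence is the social cost of assigning $k$ to $d$. Summing $\delta_{dk}^{t}$ over the chosen matching therefore yields the total social surplus generated by the MST assignment.

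Second, I would invoke the Incentive Compatibility property (proved separately for the VCG-MST mechanism via the standard Vickrey argument on \eqref{vcgrev}) to identify the reported bids $\mathscr{B}_{dk}^{t}$ with truthful valuations. Under IC, maximising $\sum_{k,d}\delta_{dk}^{t}x_{dk}^{t}$ with respect to the stated valuations coincides with maximising it with respect to the true valuations. Since [MST-P1] is solved to global optimality over the feasible polytope defined by \eqref{ta1con1}--\eqref{ta1con2_1} and \eqref{ta1con10}, the produced allocation achieves the maximum attainable social surplus, which is precisely allocative efficiency. A clean way to close this is by contradiction: suppose an alternative feasible allocation $x'$ yields a strictly larger true welfare; then $x'$ would also yield a strictly larger objective value in [MST-P1], contradicting the optimality of $X_{\mathscr{W}\mathscr{K}}$.

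The main obstacle I anticipate is handling the compensation term $s(\mathscr{B}_{dk}^{t},l_{dk})$ and the ``forced assignment'' constraint \eqref{ta1con2_1}. The former means the valuation $v_{dk}^{t}$ is a non-linear, piecewise function of the bid, so one must verify that maximising over the adjusted valuations still coincides with maximising the true welfare. The latter insists on assigning any task with a valid bidder, which is not generally a welfare-maximising restriction; I would therefore explicitly cast efficiency as ``efficient within the feasible set defined by \eqref{ta1con1}--\eqref{ta1con2_1}'' and rely on the bid bounds \eqref{bid_lbub} (in particular $\overline{b}\leq c_q$) to ensure every forced assignment still contributes non-negative $\delta_{dk}^{t}$, so that the constraint never compels a welfare-decreasing choice.
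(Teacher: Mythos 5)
Your proof takes essentially the same route as the paper's: the winner-determination problem [MST-P1] directly maximizes the total sensing cost saving $\sum\delta_{dk}^{t}x_{dk}^{t}$, which (given truthful bidding from the IC property) is the social welfare, so the optimal solution is by construction allocatively efficient; your contradiction closing is just an explicit restatement of that optimality. One caveat on your final paragraph: the claim that $\overline{b}\leq c_q$ guarantees every forced assignment has $\delta_{dk}^{t}\geq 0$ does not hold, because $\delta_{dk}^{t}=c_q l_{qk}-v_{dk}^{t}$ compares the depot-to-task distance $l_{qk}$ with the driver-to-task distance $l_{dk}$ (plus the base reward $C$ and the compensation $s$), and the paper itself reports instances where assigning a driver is costlier than dispatching a dedicated vehicle; your fallback of defining efficiency relative to the feasible set of \eqref{ta1con1}--\eqref{ta1con2_1} is the right way to handle this, so the main argument is unaffected.
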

	\begin{proof}
		The objective function Eq.\eqref{ta1obj} aims to maximize the saving on sensing cost. In this MST problem, the VCG-MST auction mechanism operates as a price-only reverse auction, as it solely collects price bids from participants. Therefore, if Eq.\eqref{ta1obj} yields the maximum value, it indicates that the assignment mechanism has achieved allocative efficiency.
	\end{proof}
	
	\begin{proposition}{\bf Incentive Compatibility}
		By the VCG-MST mechanism, submitting the true price to the platform is a dominant strategy for any driver despite how other drivers submit their bids.
	\end{proposition}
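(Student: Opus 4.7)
The plan is to reduce the statement to the textbook VCG incentive-compatibility argument after carefully identifying what counts as a truthful report in the presence of the compensation function $s$ of \eqref{compfunc}. I will call a bid $\mathscr{B}_d^{\star}$ \emph{truthful} if it lies on the compensated branch so that $v_{dk}^t=\underline v_{dk}^t$, i.e., the adjusted valuation \eqref{tvdeq} coincides with the hidden valuation \eqref{vdeq}. Under any truthful bid the utility $u_d^t=p_d^t-v_{dk}^t$ defined in \eqref{vcguti} coincides with driver $d$'s genuine surplus $p_d^t-\underline v_{dk}^t$. Fix an arbitrary profile $\mathscr{B}_{-d}$ of the other drivers' bids; the goal is to show that for every deviation $\mathscr{B}_d'$, driver $d$'s genuine surplus at $\mathscr{B}_d^{\star}$ weakly dominates that at $\mathscr{B}_d'$.

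First I would substitute the VCG payment rule \eqref{vcgrev} together with $\delta_{dk}^t=c_q l_{qk}-v_{dk}^t$ into the genuine surplus to obtain the identity
\[
p_d^t-\underline v_{dk^*}^t \;=\; (v_{dk^*}^t-\underline v_{dk^*}^t)+\Pi(\mathcal{\widetilde D}_t)-\Pi(\mathcal{\widetilde D}_t\backslash\{d\}),
\]
in which the last term depends only on $\mathscr{B}_{-d}$ and is therefore inert when maximizing over $\mathscr{B}_d$. Under the truthful bid $\mathscr{B}_d^{\star}$ the first bracket vanishes and $\Pi(\mathcal{\widetilde D}_t)$ equals the true-value social welfare, which the mechanism achieves by solving \eqref{ta1obj}. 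Under a deviation $\mathscr{B}_d'$ the mechanism solves a perturbed \eqref{ta1obj}; its selected allocation, when re-evaluated at the true valuations, yields a value no larger than that at $\mathscr{B}_d^{\star}$. A three-case analysis (driver $d$ is assigned the same task, a different task, or dropped from the winner set, with IR supplying the non-negative baseline in the last case) then closes the inequality.

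The main obstacle is the non-invertibility of the map $\mathscr{B}_{dk}^t\mapsto v_{dk}^t$ on the compensated branch, where $v_{dk}^t$ is flat as a function of the bid. I would handle this in two steps. First, verify that an admissible truthful bid exists inside $[\underline b,\overline b]$: along the compensated branch $v_{dk}^t=\underline v_{dk}^t$ holds for every bid up to the kink at $\alpha+\mu-(C+\mu l_0)/l_{dk}$, and the constraint $\underline b\ge \alpha$ is compatible with this range. Second, confirm that any bid manipulation inside the compensated region leaves $\delta_{dk}^t$ and hence the assignment invariant, so $d$'s payoff is unaffected. Non-trivial deviations therefore reduce to those crossing the kink into the overstated region, where the map is linear and the classical VCG externality argument applies verbatim.
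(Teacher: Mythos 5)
Your overall skeleton is the standard VCG externality argument, which is also what the paper uses; the paper, however, runs it as a proof by contradiction anchored to the \emph{adjusted} valuation $v_{dk}^{t}$ computed from the true bid (assume a profitable misreport, substitute $\hat{p}_d^{t}=\hat{v}_{dk}^{t}+(\hat{\Pi}(\mathcal{\widetilde{D}}_t)-\Pi(\mathcal{\widetilde{D}}_t\backslash\{d\}))$, and contradict the optimality of $\Pi(\mathcal{\widetilde{D}}_t)$), whereas you anchor the surplus to the \emph{hidden} valuation $\underline{v}_{dk}^{t}$ and try to make the two coincide at the truthful report. That re-anchoring is where your argument breaks. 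You define a truthful bid as one lying on the compensated branch of \eqref{compfunc}, so that $v_{dk}^{t}=\underline{v}_{dk}^{t}$ and the bracket $(v_{dk^*}^{t}-\underline{v}_{dk^*}^{t})$ in your identity vanishes. But $v_{dk}^{t}=\max\{C+\mathscr{B}_{d}^{t}l_{dk},\ \underline{v}_{dk}^{t}\}$, and since every admissible bid satisfies $\mathscr{B}_{d}^{t}\geq\underline{b}\geq\alpha$ and $C>0$, one has $C+\mathscr{B}_{d}^{t}l_{dk}>\alpha l_{dk}+\mu\max\{0,l_{dk}-l_0\}=\underline{v}_{dk}^{t}$ whenever $l_{dk}<l_0+C/\mu$. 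For all such tasks (which includes every task with $l_{dk}\leq l_0$, and with the paper's parameters $C=15$, $\mu=1$, $l_0=3$ means every task closer than $18$ km) the compensated branch is unreachable for \emph{any} bid in $[\underline{b},\overline{b}]$, so the admissible truthful bid whose existence you promise to "verify" in your first step does not exist, the bracket cannot be made to vanish, and $\Pi(\mathcal{\widetilde{D}}_t)$ under your "truthful" report is not the true-value social welfare. The subsequent monotonicity step ("the perturbed allocation re-evaluated at the true valuations yields no more") therefore has no correct benchmark to compare against.

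A second, related difficulty: a driver submits a single unit price $\mathscr{B}_{d}^{t}$ that applies to all tasks she bids on, and for a fixed bid different tasks generally sit on different branches of \eqref{compfunc}. Your claim that any manipulation "inside the compensated region leaves $\delta_{dk}^{t}$ and hence the assignment invariant" is a per-task statement; the same bid change can leave $v_{dk}^{t}$ fixed for a far task while moving $v_{dk'}^{t}$ for a near one, so the winner-determination problem \eqref{ta1obj} can change even when the task you focus on stays on its flat branch. To repair the proof you would either have to follow the paper and take $v_{dk}^{t}$ (evaluated at the true per-unit price) as the utility benchmark in \eqref{vcguti} — in which case the classical argument goes through for the assigned task without any appeal to the compensated branch — or explicitly restrict attention to the overstated region $\tilde{v}_{dk}^{t}\geq\underline{v}_{dk}^{t}$ where $s\equiv 0$ and the bid-to-valuation map is affine, and then acknowledge that incentive compatibility is being proved with respect to the reported-valuation utility rather than the hidden one.
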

	\begin{proof}
		We assume that everyone in $\mathcal{\widetilde{D}}_t \backslash \{d\}$ submits price bidding truthfully. If driver $d$ truthfully submits her price bidding $\mathscr{B}_{d}^{t}$, then her payoff is $p_d^{t}=v_{dk}^{t} + (\Pi(\mathcal{\widetilde{D}}_t) - \Pi(\mathcal{\widetilde{D}}_t \backslash \left\{d\right\}))$. If the driver $d$ submits her bidding price $\hat{\mathscr{B}}_{d}^{t} \neq \mathscr{B}_{d}^{t}$ untruthfully,  $\hat{v}_{dk}^{t} \neq v_{dk}^{t}$, then his payoff is $\hat{p}_d^{t}=\hat{v}_{dk}^{t}+(\hat{\Pi}(\mathcal{\widetilde{D}}_t) - \Pi(\mathcal{\widetilde{D}}_t \backslash \left\{d\right\}))$, and 
		\begin{equation*}
			\hat{\Pi}(\mathcal{\widetilde{D}}_t) = \hat{\delta}_{dk}^{t}+\sum_{j\in \mathcal{K}_t\backslash \{k\}}\sum_{l\in \mathcal{\widetilde{D}}_t\backslash \{d\}}\delta_{lj}^{t}x_{lj}^{t}
		\end{equation*}
		is the new objective function value. In light of the \citet{cheng2023novel}, we assume that driver $d$ can be matched, and obtain a higher payoff by submitting untruthfully. Then, we have:
		\begin{equation*}
			\hat{p}_d^{t}-v_{dk}^{t} > p_d^{t}-v_{dk}^{t} = \Pi(\mathcal{\widetilde{D}}_t) - \Pi(\mathcal{\widetilde{D}}_t \backslash \left\{d\right\}).
		\end{equation*}
		
		Substituting for $\hat{p}_d^{t}$ can get
		\begin{align*}
			\quad &\hat{v}_{dk}^{t}+\hat{\Pi}(\mathcal{\widetilde{D}}_t) - \Pi(\mathcal{\widetilde{D}}_t \backslash \left\{d\right\})-v_{dk}^{t} > \Pi(\mathcal{\widetilde{D}}_t) - \Pi(\mathcal{\widetilde{D}}_t \backslash \left\{d\right\}) \\
			\Rightarrow\quad & \hat{v}_{dk}^{t} + \hat{\Pi}(\mathcal{\widetilde{D}}_t) -v_{dk}^{t} +c_q l_{qk}-c_q l_{qk}> \Pi(\mathcal{\widetilde{D}}_t) \\
			\Leftrightarrow\quad & \delta_{dk}^{t} - \hat{\delta}_{dk}^{t} + \hat{\Pi}(\mathcal{\widetilde{D}}_t) > \Pi(\mathcal{\widetilde{D}}_t) \\
			\Rightarrow\quad & \delta_{dk}^{t} - \hat{\delta}_{dk}^{t} + \hat{\delta}_{dk}^{t}+\sum_{j\in \mathcal{K}_t\backslash \{k\}}\sum_{l\in \mathcal{\widetilde{D}}_t\backslash \{d\}}\delta_{lj}^{t}x_{lj}^{t} > \Pi(\mathcal{\widetilde{D}}_t) \\
			\Rightarrow\quad & \delta_{dk}^{t} + \sum_{j\in \mathcal{K}_t\backslash \{k\}}\sum_{l\in \mathcal{\widetilde{D}}_t\backslash \{d\}}\delta_{lj}^{t}x_{lj}^{t} > \Pi(\mathcal{\widetilde{D}}_t).
		\end{align*}
		
		This contradicts that the optimal objective value is $\Pi(\mathcal{\widetilde{D}}_t)=\sum_{j\in \mathcal{K}_t\backslash \{k\}}$ $\sum_{l\in \mathcal{\widetilde{D}}_t\backslash \{d\}}\delta_{lj}^{t}x_{lj}^{t}+\delta_{dk}^{t}$, indicating that the driver $d$ cannot obtain higher payoff by submitting untruthfully. Therefore, the VCG-MST auction mechanism is incentive compatibility.
	\end{proof}

	\section{A refine budget control mechanism for MST assignment}\label{section:rbc}
	
	The VCG-based MST assignment mechanism offers a rational, efficient, and truthful approach to assigning MSTs within the ride-hailing platform. However, the resulting assignment plan may sometimes fail to meet budgetary requirements. Our numerical tests indicate instances where assigning MSTs to Type-B drivers proves to be more costly than utilizing dedicated vehicles for certain tasks. This could lead to budget deficits for the ride-hailing platforms, thereby affect their enthusiasms for incorporating mobile sensing services.
	
	To address this challenge and maintain control over the overall budget, we refine the winner determination and payment rules of the VCG mechanism. Additionally, we propose a budget control mechanism for MST assignment. The refined budget control MST assignment mechanism (RBC-MST) is designed to satisfy economic properties such as IR, IC, and BB.
	
	\subsection{Budget control for winner selection}
	The allocation of the mobile sensing budget directly impacts the sensing capabilities of the vehicles. A straightforward yet potentially risky approach involves allocating the entire budget at once. Under this method, the budget for each cycle fluctuates throughout the decision-making process as payments from previous bidding rounds are subtracted. However, this approach may lead to substantial opportunity costs \citep{tafreshian2022truthful}, particularly because complete information regarding the number and locations of MSTs may not be available initially.
	
	An alternative strategy is to incrementally invest the budget, distributing funds across each round of MST assignment and subsequently reallocating any remaining funds back to the pool. The amount of money invested in each round is primarily determined by the number of sensing tasks released during that round. The RBC-MST mechanism then determines the winners and payments based on the budgets allocated for each round of auctions.
	
	Let $\mathcal{K}_T^{r}$ denote the set of the remaining tasks In the $T$s round of MST assignment and $\mathcal{K}_{T}$ the subset of tasks released at the beginning of $T$. The budget for this round is then calculated by:
	\begin{equation}\label{budget_t}
		\Omega_T = \frac{\lvert\mathcal{K}_{T}\rvert}{\lvert\mathcal{K}_T^{r}\rvert}(\Omega - \sum_{i=0}^{T-1}\Theta_i)
	\end{equation}
	where $\Theta_i$ is the actual expense on tasks in round $i$ and $\Theta_0 = 0$.
	
	We are now ready to propose the following MST assignment problem by imposing an additional budget constraint on [MST-P1].
	
	[MST-P2]
	\begin{align}
		\min &\quad\Psi^{\prime}=\sum_{k\in \mathcal{K}_T}\sum_{d\in \mathcal{\widetilde{D}}_t} v_{dk}^{t}x_{dk}^{t} \label{ta2obj} \\
		\hbox{s.t.}~~&\text{Eqs.\eqref{tvdeq}, \eqref{ta1con3},} \eqref{ta1con1} - \eqref{ta1con10} \nonumber \\
		& \tau_{k}^{t} = 1, \quad \exists v_{dk}^{t} < +\infty \label{ta2con3}, \forall d\in \widetilde{D}_t,k\in\mathcal{K}_T \\
		&\sum_{k\in \mathcal{K}_T}\sum_{d\in \mathcal{\widetilde{D}}_t} (C+\overline{b}l_{dk}+s(\overline{b}, l_{dk}))x_{dk}^{t} \leq \Omega_T \label{ta2con4}, \forall d\in \widetilde{D}_t,k\in\mathcal{K}_T
	\end{align}
	where the Constraint \eqref{ta2con4} restricts the number of matching pairs in round $T$. 
	
	The drivers for MST tasks are selected through the following greedy procedure. Initially, Constraint \eqref{ta2con4} is relaxed when solving [MST-P2], and $\mathscr{W}\mathscr{K}_t$ represents the tuple set of ``winners'' along with the MSTs assigned to them. Though the payment to these drivers may potentially violate the budget constraint, they form the foundation for the final solution.
	
	Subsequently, we exclude the most ``expensive'' driver from the set, identified as the one with the largest true valuation. Mathematically,
	\begin{equation}\label{ta2con7}
		x_{d^{*}k^{*}}^{t} = 0, \quad (d^{*}, k^{*}) = \argmax_{(d,k)\in \mathscr{W}\mathscr{K}_t}\{v_{dk}^{t}\enspace\big| \enspace \sum\limits_{(d,k)\in \mathscr{W}\mathscr{K}_t}(C+\overline{b}l_{dk}+s(\overline{b}, l_{dk})) > \Omega_T \}
	\end{equation}
	The removed driver-MST pair is then added to a \textit{tabu list} $\mathscr{N}\mathscr{K}_t$, prohibiting its selection in future iterations. The [MST-P2] is then resolved with respect to the $\mathscr{N}\mathscr{K}_t$. This procedure is iterated until the budget constraint is satisfied. If all drivers eligible for task $k^{*}$ are in the tabu list, we may relax Constraint \eqref{ta2con3} based on the equation \eqref{ta2con3_relax}. 
	
	\begin{equation}\label{ta2con3_relax}
		\tau_{k^{*}}^{t} = 
		\begin{cases}
			0, & \textnormal{if} \quad \forall (d, k^{*})\in \mathscr{N}\mathscr{K}_t \enspace\textnormal{and}\enspace v_{dk^{*}}^{t} < +\infty \\
			1, & \textnormal{others}
		\end{cases}
	\end{equation}

	\subsection{A tailored payment rule}
	As with the VCG-MST mechanism, we denote the set of successfully matched driver-task pairs as $\mathscr{WK}_t$, with $\mathcal{W}_t$ representing the set of winners and $\mathcal{K}_t$ representing the set of task requests. We propose the following payment rule for the selected drivers. Driver $d$'s payoff is determined by:
	\begin{equation}\label{ta2payoff}
		p_d^{t} = 
		\begin{cases}
			C+\overline{b}l_{dk}+s(\overline{b}, l_{dk}), \quad &\Psi^{\prime}(\widetilde{D}_t\backslash\{d\}) < \Psi^{\prime}(\widetilde{D}_t) \\
			\min \bigg\{ v_{dk}^{t} + \big(\Psi^{\prime}(\widetilde{D}_t\backslash\{d\}) - \Psi^{\prime}(\widetilde{D}_t)\big), \enspace C+\overline{b}l_{dk}+s(\overline{b}, l_{dk}) \bigg\}, \quad &\Psi^{\prime}(\widetilde{D}_t\backslash\{d\}) \geq \Psi^{\prime}(\widetilde{D}_t)
		\end{cases}
	\end{equation}
	
	Assuming that $X(\widetilde{D}_t\backslash\{d\})$ and $X(\widetilde{D}_t)$ are solutions corresponding to the optimal objective value $\Psi^{\prime}(\widetilde{D}_t\backslash\{d\})$ and $\Psi^{\prime}(\widetilde{D}_t)$, respectively. We also utilize the marginal benefit of the winner $d$ to calculate his/her payoff, but this payoff can not exceed the upper bound of $d$'s true valuation. It is worth noting that the number of driver-MST pairs in $X(\widetilde{D}_t\backslash\{d\})$ is less than that in $X(\widetilde{D}_t)$ when the inequality $\Psi^{\prime}(\widetilde{D}_t\backslash\{d\}) < \Psi^{\prime}(\widetilde{D}_t)$ holds. This is because the task $k$ can not be assigned when excluding the driver $d$ from $\widetilde{D}_t$. There are two cases leading to this result, i.e., (1) driver $d$ is the only eligible bidder for the task $k$; (2) all driver-MST pairs to the task $k$ are prohibited except for the driver $d$ when solving the model [MST-P2]. In this situation, we set the upper bound of the driver's true valuation as the payoff.
	
	The whole process is summarized in Algorithm \ref{algorithm3}.
	\begin{algorithm}[ht]
		\caption{MST assignment with the RBC-MST mechanism}\label{algorithm3}
		\KwIn{The driver set $\mathcal{D}_t$, the MST set $\mathcal{K}_{T}$, the rider set $\mathcal{R}_t$, the true valuation matrix $V$, the budget $\Omega_T$}
		\KwOut{Trip matching result $X_{\mathscr{D}\mathscr{R}}$, task assignment result $X_{\mathscr{W}\mathscr{K}}$, task payment $\textbf{p}$}
		$\mathscr{N}\mathscr{K}_t, X_{\mathscr{W}\mathscr{K}}, X_{\mathscr{W}}, X_{\mathscr{K}}\leftarrow \varnothing$\;
		$flag \leftarrow \textbf{True}$\;
		\While{flag}
		{Solve the [MST-P2] without considering budget constraint to obtain the optimal solution $\mathscr{W}\mathscr{K}_t$, the set of winners $\mathscr{W}_t$ and the set of selected task requests $\mathscr{K}_t$\;
			\eIf{$\sum_{(d,k)\in\mathscr{W}\mathscr{K}_t}(C+\overline{b}l_{dk}+s(\overline{b},l_{dk})) > \Omega_T$}
			{$d^{*}, k^{*} = \argmax_{(d,k)\in \mathscr{W}\mathscr{K}_t}\left\{v_{dk}^{t} \right\}$\;
				\If{$\forall (d, k^{*})\in \mathscr{N}\mathscr{K}_t \enspace\textnormal{and}\enspace v_{dk^{*}}^{t} < +\infty$}
				    {$\tau_{k^{*}}^{t} \leftarrow 0$\;}
				$\mathscr{N}\mathscr{K}_t\leftarrow \mathscr{N}\mathscr{K}_t \cup \left\{(d^{*}, k^{*}) \right\}$\;}
			{$X_{\mathscr{W}\mathscr{K}}\leftarrow \mathscr{W}\mathscr{K}_t$\;
				$X_{\mathscr{W}}\leftarrow \mathcal{W}_t$\;
				$X_{\mathscr{K}}\leftarrow \mathcal{K}_t$\;
				$flag \leftarrow \textbf{False}$\;
			}
		}
		Find the optimal solution $X_{\mathscr{D}\mathscr{R}}$ following the steps 3 to 15 of Algorithm \ref{algorithm2}\;
		\For{$(d,k)\in \mathscr{W}\mathscr{K}_t$}
		{
			Re-calibrate the MST assignment model to obtain the optimal objective function value $\Psi^{\prime}(\widetilde{D}_t\backslash\{d\}) $ without considering the driver $d$\;
			Calculate the driver $d$'s payoff based on the tailored payment rule\;
		}
		\textbf{Return} \quad $X_{\mathscr{D}\mathscr{R}}, X_{\mathscr{W}\mathscr{K}}, \textbf{p}$
	\end{algorithm}
	
	\subsection{Economic properties}
	The proposed RBC-MST mechanism is proved to satisfy BB, IC, and IR properties. Additionally, under the condition of prohibiting driver-MST pairs in the tabu list $\mathscr{N}\mathscr{K}_t$, RBC-MST mechanism achieves the efficient allocation. Before proving the BB property, we show the assignment mechanism satisfies Lemma \ref{lemma_bidub}.
	\begin{remark}\label{remark_bidub}
		Let the upper bound of the driver $d$'s true valuation to be $\overline{v}_{dk}^{t} = C+\overline{b}l_{dk}+s(\overline{b}, l_{dk})$. When $s(\mathscr{B}_{d}^{t}, l_{dk})=0$, $\overline{v}_{dk}^{t}$ bounds $v_{dk}^{t}$ from above.
	\end{remark}
	\begin{lemma}\label{lemma_bidub}
		For any valid bid $\mathscr{B}_{d}^{t}\in \left[\underline{b}, \overline{b}\right]$, $\overline{v}_{dk}^{t} \geq v_{dk}^{t}$ if $s(\mathscr{B}_{d}^{t}, l_{dk})\neq 0$.
	\end{lemma}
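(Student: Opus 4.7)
The plan is to unwind $v_{dk}^t$ and $\overline{v}_{dk}^t$ directly from the definitions in \eqref{tvdeq} and \eqref{compfunc}, exploit a cancellation that occurs whenever the compensation $s$ is in its active branch, and then run a short case split on whether $(\overline{b}, l_{dk})$ also triggers $s$.

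First, under the hypothesis $s(\mathscr{B}_d^t, l_{dk}) \neq 0$, the piecewise definition supplies $s(\mathscr{B}_d^t, l_{dk}) = \alpha l_{dk} + \mu(l_{dk} - l_0) - \mathscr{B}_d^t l_{dk}$, so by \eqref{tvdeq},
\begin{equation*}
v_{dk}^t \;=\; C + \mathscr{B}_d^t l_{dk} + \alpha l_{dk} + \mu(l_{dk} - l_0) - \mathscr{B}_d^t l_{dk} \;=\; C + \alpha l_{dk} + \mu(l_{dk} - l_0).
\end{equation*}
The bid-dependent terms $\pm\,\mathscr{B}_d^t l_{dk}$ cancel, leaving $v_{dk}^t$ in a closed form that does not involve $\mathscr{B}_d^t$. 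This is the pivotal simplification, since it collapses the dependence on the bid that the upper bound $\overline{v}_{dk}^t$ has to dominate.

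Second, I would evaluate $\overline{v}_{dk}^t = C + \overline{b} l_{dk} + s(\overline{b}, l_{dk})$ by the same case logic. If $(\overline{b}, l_{dk})$ is also in the active region of $s$, the same cancellation yields $\overline{v}_{dk}^t = C + \alpha l_{dk} + \mu(l_{dk} - l_0) = v_{dk}^t$ and the lemma holds with equality. Otherwise $s(\overline{b}, l_{dk}) = 0$ and the inequality $\overline{v}_{dk}^t \geq v_{dk}^t$ reduces to the single scalar inequality $(\alpha + \mu - \overline{b})\, l_{dk} \leq \mu l_0$. When $\overline{b} \geq \alpha + \mu$ the left-hand side is non-positive and we are done immediately.

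The main obstacle is the remaining sub-case, where $\overline{b} < \alpha + \mu$ yet $s(\overline{b}, l_{dk}) = 0$. Here the ``otherwise'' branch of \eqref{compfunc} applied at the bid $\overline{b}$ gives the ceiling $l_{dk} \leq \frac{C + \mu l_0}{\alpha + \mu - \overline{b}}$, while the hypothesis $s(\mathscr{B}_d^t, l_{dk}) \neq 0$ combined with the monotonicity in the bid (via $\mathscr{B}_d^t \leq \overline{b}$ and $\underline{b} \leq \mathscr{B}_d^t$ from \eqref{bid_lbub}) pins $l_{dk}$ between two comparable thresholds. I would finish by carefully tracking the constant $C$ through these inequalities so as to extract the sharper bound $(\alpha + \mu - \overline{b})\, l_{dk} \leq \mu l_0$; this is the only step that is not routine substitution, and is where the piecewise structure of $s$ actually does work. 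The rest of the proof is just bookkeeping of the definitions.
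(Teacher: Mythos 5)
Your route is the same as the paper's: cancel the bid-dependent terms when $s$ is active to obtain $v_{dk}^t = C+\alpha l_{dk}+\mu(l_{dk}-l_0)$, then split on whether $\overline{b}\geq\alpha+\mu$ and on whether $s(\overline{b},l_{dk})$ vanishes. The first three sub-cases are fine. The gap sits exactly where you flag it: in the sub-case $\overline{b}<\alpha+\mu$ with $s(\overline{b},l_{dk})=0$, the only available information is $l_{dk}\leq\frac{C+\mu l_0}{\alpha+\mu-\overline{b}}$, i.e.\ $(\alpha+\mu-\overline{b})\,l_{dk}\leq C+\mu l_0$, and no amount of ``carefully tracking $C$'' upgrades this to the inequality $(\alpha+\mu-\overline{b})\,l_{dk}\leq\mu l_0$ that you need; the two differ by the strictly positive constant $C$. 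Concretely, take $\alpha=2$, $\mu=1$, $C=15$, $l_0=3$, $\mathscr{B}_d^t=2$, $\overline{b}=2.5$, $l_{dk}=20$: then $s(\mathscr{B}_d^t,l_{dk})=17\neq 0$ and $v_{dk}^t=72$, while $s(\overline{b},l_{dk})=0$ and $\overline{v}_{dk}^t=65<v_{dk}^t$. So under the literal definition \eqref{compfunc} the claimed inequality fails in this sub-case, and your final step cannot be completed.

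For comparison, the paper's own proof simply asserts $C+\overline{b}l_{dk}\geq C+\alpha l_{dk}+\mu(l_{dk}-l_0)$ at this point, so it contains the same unjustified jump. The root cause is that \eqref{compfunc} is inconsistent with its stated derivation $s=\max\{\underline{v}_{dk}^t-\tilde{v}_{dk}^t,0\}$: since $\tilde{v}_{dk}^t=C+\mathscr{B}_{dk}^t l_{dk}$, the active branch should read $\alpha l_{dk}+\mu(l_{dk}-l_0)-C-\mathscr{B}_{dk}^t l_{dk}$ (the activation threshold $l_{dk}>\frac{C+\mu l_0}{\alpha+\mu-\mathscr{B}_{dk}^t}$ printed in \eqref{compfunc} is in fact the one belonging to this corrected expression). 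With that correction your cancellation gives $v_{dk}^t=\alpha l_{dk}+\mu(l_{dk}-l_0)$, the problematic sub-case reduces precisely to $(\alpha+\mu-\overline{b})\,l_{dk}\leq C+\mu l_0$, which is exactly the available ceiling, and both your argument and the paper's close cleanly. Your instinct about where the real work lies was correct, but the step is not merely ``not routine''---it is false as stated, and repairing it requires amending the definition of $s$ rather than sharper bookkeeping.
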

	\begin{proof}
		By Equation \eqref{compfunc}, we have $\mathscr{B}_{d}^{t} < \alpha + \mu$ because  $s(\mathscr{B}_{d}^{t}, l_{dk})\neq 0$. Two scenarios arise concerning the relationship between $\alpha + \mu$ and $\overline{b}$:
		
		(1) $\overline{b} < \alpha + \mu$. $l_{dk} > \frac{C+\mu l_0}{\alpha+\mu-\mathscr{B}_{d}^{t}}$ and $\frac{C+\mu l_0}{\alpha+\mu-\overline{b}} > \frac{C+\mu l_0}{\alpha+\mu-\mathscr{B}_{d}^{t}}$ hold. If $s(\overline{b},l_{dk})\neq 0$, we have $l_{dk} > \frac{C+\mu l_0}{\alpha+\mu-\overline{b}}$ and hence $\overline{v}_{dk}^{t} = v_{dk}^{t} = C+\alpha l_{dk}+\mu(l_{dk}-l_0)$. If $s(\overline{b},l_{dk})=0$, we have $ \frac{C+\mu l_0}{\alpha+\mu-\mathscr{B}_{d}^{t}}< l_{dk} \leq \frac{C+\mu l_0}{\alpha+\mu-\overline{b}}$. Therefore, $\overline{v}_{dk}^{t} = C+\overline{b}l_{dk}\geq v_{dk}^{t} = C+\alpha l_{dk}+\mu(l_{dk}-l_0)$. 
		
		(2) $\alpha + \mu \leq \overline{b}$. $s(\overline{b},l_{dk})$ should be equal to 0. In this case, $\overline{v}_{dk}^{t} = C+\overline{b}l_{dk}\geq v_{dk}^{t} = C+\alpha l_{dk}+\mu(l_{dk}-l_0)$.
		
		In both cases, $\overline{v}_{dk}^{t} \geq v_{dk}^{t}$ given $s(\mathscr{B}_{d}^{t}, l_{dk})\neq 0$. This completes the proof.
	\end{proof}
	\begin{proposition}{\bf Budget Balance}
		In each plan cycle, the actual expenditure incurred by the RBC-MST mechanism is either less than or equal to the prescribed budget $\Omega_T$.
	\end{proposition}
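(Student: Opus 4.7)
The plan is to argue budget balance in two layers: first, that the winner-selection loop terminates only when the \textit{worst-case} total payout $\sum \overline{v}_{dk}^{t}$ fits inside $\Omega_T$, and second, that the payment rule never charges more than this worst-case valuation on any winner. Formally, I will write $\overline{v}_{dk}^t := C+\overline{b}\,l_{dk}+s(\overline{b},l_{dk})$ as in Remark \ref{remark_bidub}, and show that the actual expenditure $\Theta_T = \sum_{(d,k)\in\mathscr{W}\mathscr{K}_t} p_d^t \leq \sum_{(d,k)\in\mathscr{W}\mathscr{K}_t}\overline{v}_{dk}^t \leq \Omega_T$.

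First, I would examine the termination condition of the greedy procedure in Algorithm \ref{algorithm3}. The loop repeatedly removes the most expensive assigned pair $(d^\ast,k^\ast)$ from $\mathscr{W}\mathscr{K}_t$ as long as $\sum_{(d,k)\in\mathscr{W}\mathscr{K}_t}\overline{v}_{dk}^t > \Omega_T$, updates the tabu list $\mathscr{N}\mathscr{K}_t$, and resolves [MST-P2]. Because the number of eligible driver-task pairs is finite and each iteration strictly adds a pair to $\mathscr{N}\mathscr{K}_t$, the loop must exit in finitely many iterations (possibly with some $\tau_{k^\ast}^t$ relaxed to $0$ via \eqref{ta2con3_relax}). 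The exit branch is reached only when $\sum_{(d,k)\in\mathscr{W}\mathscr{K}_t}\overline{v}_{dk}^t \leq \Omega_T$, so the terminal solution satisfies the budget constraint \eqref{ta2con4} at face value.

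Next, I would verify that the tailored payment rule \eqref{ta2payoff} guarantees $p_d^t \leq \overline{v}_{dk}^t$ for every winner $(d,k)\in\mathscr{W}\mathscr{K}_t$. The rule has two branches: in the first branch, $p_d^t = \overline{v}_{dk}^t$ exactly; in the second branch, $p_d^t = \min\{v_{dk}^t + (\Psi'(\widetilde{D}_t\setminus\{d\})-\Psi'(\widetilde{D}_t)),\; \overline{v}_{dk}^t\}$, which is at most $\overline{v}_{dk}^t$ by construction of the minimum. Combining these bounds gives
\begin{equation*}
\Theta_T \;=\; \sum_{(d,k)\in\mathscr{W}\mathscr{K}_t} p_d^t \;\leq\; \sum_{(d,k)\in\mathscr{W}\mathscr{K}_t} \overline{v}_{dk}^t \;\leq\; \Omega_T,
\end{equation*}
which is exactly the budget balance claim.

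The only subtle point worth double-checking, and the step I expect to be the main obstacle, is justifying that $\overline{v}_{dk}^t$ genuinely dominates $v_{dk}^t$ for every valid bid so that the ``min'' in the second branch of \eqref{ta2payoff} is always well-defined and tight; this is where Lemma \ref{lemma_bidub} together with Remark \ref{remark_bidub} is invoked. Once this domination is in place, the rest of the argument reduces to the loop-invariant and the straightforward bound above, so the proof is essentially bookkeeping around the greedy termination rather than a deep economic argument.
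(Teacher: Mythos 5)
Your proposal is correct and follows essentially the same route as the paper: bound each winner's payment by $\overline{v}_{dk}^{t}=C+\overline{b}l_{dk}+s(\overline{b},l_{dk})$ via the two branches of the payment rule \eqref{ta2payoff}, then observe that the greedy winner-selection loop only terminates once $\sum_{(d,k)\in\mathscr{W}\mathscr{K}_t}\overline{v}_{dk}^{t}\leq\Omega_T$, i.e., once constraint \eqref{ta2con4} holds. Your treatment of loop termination is in fact slightly more explicit than the paper's, and the only cosmetic difference is that the domination $\overline{v}_{dk}^{t}\geq v_{dk}^{t}$ from Lemma \ref{lemma_bidub}, which both you and the paper invoke, is not strictly needed for budget balance since the $\min$ in \eqref{ta2payoff} caps the payment at $\overline{v}_{dk}^{t}$ regardless.
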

	\begin{proof}
		Let the driver $d$'s true valuation to be $v_{dk}^{t}=C+\mathscr{B}_{d}^{t}l_{dk}+s(\mathscr{B}_d^{t},l_{dk})$ at the optimal solution $\mathscr{W}\mathscr{K}_t$. When both $s(\mathscr{B}_d^{t},l_{dk})$ and $s(\overline{b}, l_{dk})$ are equal to zero, $\overline{v}_{dk}^{t}\geq v_{dk}^{t}$ must hold by Equation \eqref{bid_lbub}. Per Lemma \ref{lemma_bidub}, $\overline{v}_{dk}^{t}\geq v_{dk}^{t}$ also holds if $s(\mathscr{B}_d^{t},l_{dk}) \neq 0$. Therefore $\overline{v}_{dk}^{t}$ is the upper bound of the driver $d$'s true valuation. 
		
		By Equation \eqref{ta2payoff}, the actual payments satisfies:
		\begin{equation*}
			\sum_{(d,k)\in\mathscr{W}\mathscr{K}_t}p_d^{t} \leq \sum_{(d,k)\in\mathscr{W}\mathscr{K}_t}(C+\overline{b}l_{dk}+s(\overline{b}, l_{dk})).
		\end{equation*}
		
		Due to the constraints in [MST-P2], the true valuation of the selected winners satisfies $\sum_{(d,k)\in\mathscr{W}\mathscr{K}_t}(C+\overline{b}l_{dk}+s(\overline{b}, l_{dk}))$. Thus, we obtain
		\begin{equation*}
			\sum_{(d,k)\in\mathscr{W}\mathscr{K}_t}p_d^{t} \leq \sum_{(d,k)\in\mathscr{W}\mathscr{K}_t}(C+\overline{b}l_{dk}+s(\overline{b}, l_{dk})) \leq \Omega_T.
		\end{equation*}
		
		Thus, the actual payment in any round of auction $T$ is less than or equal to the budget $\Omega_T$.
	\end{proof}
	
	\begin{proposition}{\bf Individual Rationality}
		With the RBC-MST mechanism, every eligible driver who submits bids will not incur negative utility.
	\end{proposition}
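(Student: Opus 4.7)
The plan is to verify the non-negativity of $u_d^t = p_d^t - v_{dk}^t$ by following the two-branch structure of the payment rule \eqref{ta2payoff}. Since the rule already splits on the sign of $\Psi^{\prime}(\widetilde{D}_t\backslash\{d\}) - \Psi^{\prime}(\widetilde{D}_t)$, the proof reduces to bounding each branch of $p_d^t$ from below by $v_{dk}^t$. The key auxiliary fact I would lean on throughout is the inequality $\overline{v}_{dk}^t := C + \overline{b}l_{dk} + s(\overline{b}, l_{dk}) \geq v_{dk}^t$, which has just been established in Lemma \ref{lemma_bidub} (and follows from \eqref{bid_lbub} together with the definition of $s$ in the trivial case where $s(\mathscr{B}_d^t, l_{dk}) = 0$, per Remark \ref{remark_bidub}).

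First I would address the branch $\Psi^{\prime}(\widetilde{D}_t\backslash\{d\}) < \Psi^{\prime}(\widetilde{D}_t)$. Here the payment is set to $\overline{v}_{dk}^t$, so the claim $p_d^t \geq v_{dk}^t$ is immediate from the upper-bound property of $\overline{v}_{dk}^t$. No further argument about the marginal contribution of driver $d$ is needed in this case, because the rule deliberately falls back to the cap $\overline{v}_{dk}^t$ whenever removing $d$ actually decreases the objective of the minimization problem [MST-P2].

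Next I would handle the branch $\Psi^{\prime}(\widetilde{D}_t\backslash\{d\}) \geq \Psi^{\prime}(\widetilde{D}_t)$, in which the payment is
\begin{equation*}
p_d^t = \min\bigl\{\, v_{dk}^{t} + \bigl(\Psi^{\prime}(\widetilde{D}_t\backslash\{d\}) - \Psi^{\prime}(\widetilde{D}_t)\bigr),\; \overline{v}_{dk}^t \,\bigr\}.
\end{equation*}
The first argument of the minimum is at least $v_{dk}^t$ because the marginal term $\Psi^{\prime}(\widetilde{D}_t\backslash\{d\}) - \Psi^{\prime}(\widetilde{D}_t)$ is non-negative by hypothesis, and the second argument is at least $v_{dk}^t$ by the upper-bound property. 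Since both arguments dominate $v_{dk}^t$, so does their minimum, yielding $u_d^t \geq 0$.

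The main obstacle, modest as it is, lies not in the case analysis itself but in making absolutely sure that $\overline{v}_{dk}^t \geq v_{dk}^t$ holds in every sub-case of the piecewise definition of $s$. Lemma \ref{lemma_bidub} takes care of the nontrivial case $s(\mathscr{B}_d^t, l_{dk}) \neq 0$ via a branch on whether $\overline{b} < \alpha + \mu$; the case $s(\mathscr{B}_d^t, l_{dk}) = 0$ must be checked separately by substituting $v_{dk}^t = C + \mathscr{B}_d^t l_{dk}$ and using $\mathscr{B}_d^t \leq \overline{b}$ from \eqref{bid_lbub}, plus the non-negativity of $s(\overline{b}, l_{dk})$. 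Once this cap property is in hand, both branches of \eqref{ta2payoff} yield $p_d^t \geq v_{dk}^t$ directly, completing the proof.
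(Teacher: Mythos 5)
Your proof is correct and follows essentially the same route as the paper's: both split on the sign of $\Psi^{\prime}(\widetilde{D}_t\backslash\{d\}) - \Psi^{\prime}(\widetilde{D}_t)$ per the payment rule \eqref{ta2payoff} and reduce each branch to the cap inequality $\overline{v}_{dk}^t \geq v_{dk}^t$ supplied by Lemma \ref{lemma_bidub} (with the $s(\mathscr{B}_d^t, l_{dk})=0$ sub-case handled via \eqref{bid_lbub}, as in Remark \ref{remark_bidub}). Your explicit check of that trivial sub-case is, if anything, slightly more careful than the paper's, which defers it to the Budget Balance proof.
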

	\begin{proof}
		Let $\Psi_{\mathscr{W}\mathscr{K}_t}$ to be the corresponding value of the objective function of [MST-P2]. For any driver-MST pair $(d,k)\in \mathscr{W}\mathscr{K}_t$, we have $x_{dk}^{t}=1$. Moreover, we let $\overline{v}_{dk}^{t} = C+ \overline{b}l_{dk}+s(\overline{b}, l_{dk})$.

		When $\Psi^{\prime}(\widetilde{D}_t\backslash\{d\}) \geq \Psi^{\prime}(\widetilde{D}_t)$, the utility of driver $d$ satisfies
		\begin{align*}
			u_d^{t} &= p_d^{t} - v_{dk}^{t} = \min \bigg\{ v_{dk}^{t} + \big(\Psi^{\prime}(\widetilde{D}_t\backslash\{d\}) - \Psi^{\prime}(\widetilde{D}_t)\big), \enspace \overline{v}_{dk}^{t} \bigg\} - v_{dk}^{t} \\
			& = \min \bigg\{ \big(\Psi^{\prime}(\widetilde{D}_t\backslash\{d\}) - \Psi^{\prime}(\widetilde{D}_t)\big), \enspace \overline{v}_{dk}^{t} - v_{dk}^{t} \bigg\} > 0.
		\end{align*} This a direct result of Lemma \ref{lemma_bidub}.

		When $\Psi^{\prime}(\widetilde{D}_t\backslash\{d\}) < \Psi^{\prime}(\widetilde{D}_t)$, we have:
		\begin{align*}
			p_d^{t} - v_{dk}^{t} = \overline{v}_{dk}^{t} - v_{dk}^{t} \geq 0.
		\end{align*} by Lemma \ref{lemma_bidub}.
		
		Therefore, each selected driver can attain non-negative utility, while the utility of any deselected driver is null. This completes the proof of the IR property.
	\end{proof}
	
	\begin{proposition}{\bf (\textit{Ex-post}) Incentive Compatibility}
		Submitting a truthful bid is a dominant strategy for any driver in the RBC-MST mechanism given the other drivers submit truthful bids.
	\end{proposition}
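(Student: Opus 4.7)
The plan is to fix truthful bids from all drivers in $\widetilde{D}_t \setminus \{d\}$ and compare driver $d$'s realized utility under the truthful report $\mathscr{B}_d^t$ (inducing true valuation $v_{dk}^t$) versus an arbitrary misreport $\hat{\mathscr{B}}_d^t$ (inducing reported valuation $\hat{v}_{dk}^t$). Let $\mathscr{W}\mathscr{K}_t$ and $\widehat{\mathscr{W}\mathscr{K}}_t$ denote the assignments that Algorithm \ref{algorithm3} returns in the two scenarios, $\Psi^{\prime}(\widetilde{D}_t)$ and $\hat{\Psi}^{\prime}(\widetilde{D}_t)$ the terminal [MST-P2] objective values, and $p_d^t,\hat{p}_d^t$ the corresponding payments. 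Crucially, $\Psi^{\prime}(\widetilde{D}_t\setminus\{d\})$ is independent of $d$'s report, since it is computed with $d$ excluded.

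First I would dispose of the easy sub-cases. If $d$ is unmatched under both reports, $u_d^t=\hat{u}_d^t=0$. If $d$ is matched only under truth, the IR property just proved gives $u_d^t\geq 0=\hat{u}_d^t$. For the case where $d$ wins only under misreport with some pair $(d,k)\in\widehat{\mathscr{W}\mathscr{K}}_t$, I would argue that any bid low enough to attract selection forces the payment branch of Equation \eqref{ta2payoff} to yield either $\hat{v}_{dk}^t+(\Psi^{\prime}(\widetilde{D}_t\setminus\{d\})-\hat{\Psi}^{\prime}(\widetilde{D}_t))\leq v_{dk}^t$ or a capped payment $\overline{v}_{dk}^t$; in the latter event Lemma \ref{lemma_bidub} reduces the analysis to the former via a short case split on whether $s(\mathscr{B}_d^t,l_{dk})=0$. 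In either sub-case $\hat{u}_d^t\leq 0$, so truth-telling is weakly preferable.

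The principal case is when $d$ wins under both reports. Mirroring the contradiction structure used in the VCG-MST incentive-compatibility proof, I would assume $\hat{u}_d^t>u_d^t$ and substitute the payment rule \eqref{ta2payoff} to rewrite the inequality as a comparison between $\hat{\Psi}^{\prime}(\widetilde{D}_t)$ and $\Psi^{\prime}(\widetilde{D}_t)$, using that the externality term $\Psi^{\prime}(\widetilde{D}_t\setminus\{d\})$ is invariant under $d$'s report. Replacing $\hat{v}_{dk}^t$ by $v_{dk}^t$ inside the assignment $\widehat{\mathscr{W}\mathscr{K}}_t$ produces a candidate feasible solution for the truthful instance of [MST-P2]; if it genuinely is feasible, its objective strictly undercuts $\Psi^{\prime}(\widetilde{D}_t)$, contradicting optimality. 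For the min-branch of the payment rule I would again invoke Lemma \ref{lemma_bidub} to prevent the cap $\overline{v}_{dk}^t$ from breaking the comparison.

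The main obstacle is transferring feasibility of the substituted assignment between the truthful and misreported instances, because the tabu list $\mathscr{N}\mathscr{K}_t$ and, through it, the final [MST-P2] depend on the full sequence of greedy removals driven by reported valuations. I expect to need an auxiliary monotonicity lemma asserting that lowering a reported $v_{dk}^t$ only (weakly) delays the iteration at which $(d,k)$ is tabu-listed by the $\arg\max$ selection rule of Equation \eqref{ta2con7}, so the tabu list under truth is contained in the one produced by a sufficiently aggressive undercut. This coupling is what would let me transplant the assignment $\widehat{\mathscr{W}\mathscr{K}}_t$ into the truthful instance legally and complete the contradiction.
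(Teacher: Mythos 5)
Your proposal follows essentially the same route as the paper's proof: a case split on whether driver $d$ is selected, the observation that $\Psi^{\prime}(\widetilde{D}_t\backslash\{d\})$ does not depend on $d$'s report, and a VCG-style contradiction in the case where $d$ wins under both reports, with Lemma \ref{lemma_bidub} handling the capped branch of the payment rule \eqref{ta2payoff}. The paper organizes the selected case by payment branch (sole eligible bidder, binding cap, and the genuine min-branch) and handles your ``wins only under misreport'' case concretely rather than abstractly: $d$ displaces the truthful winner $l$ only if $\hat{v}_{dk}^{t}\leq v_{lk}^{t}$, and since $l$ beat $d$ under truthful reporting we have $v_{lk}^{t}\leq v_{dk}^{t}$; the payment then evaluates to $v_{lk}^{t}\leq v_{dk}^{t}$, so the deviating utility is nonpositive. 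You should make that step explicit instead of gesturing at ``any bid low enough to attract selection.''

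The one substantive issue is the feasibility-transfer step that you flag yourself, and you are right that it is the crux. The final instance of [MST-P2] is defined relative to the tabu list $\mathscr{N}\mathscr{K}_t$, which is built by the greedy removals \eqref{ta2con7} using reported valuations, so the truthful and misreported runs may optimize over different feasible sets, and the substitution $\hat{\Psi}^{\prime}(\widetilde{D}_t)=\hat{v}_{dk}^{t}+\sum_{(l,j)}v_{lj}^{t}x_{lj}^{t}$ is not automatic. Be aware that the paper's own proof performs exactly this substitution without comment, i.e., it implicitly treats winner selection as a single optimization over a report-independent feasible set. Your proposed monotonicity lemma (lowering the reported $\hat{v}_{dk}^{t}$ only delays the tabu-listing of $(d,k)$) addresses the first-order effect but not the knock-on effects: once a different pair is removed at some iteration, the subsequent optima and hence the subsequent removals can diverge, and note that the budget test uses the bid-independent upper bounds $\overline{v}_{dk}^{t}$ while the removal rule uses $v_{dk}^{t}$, so the two runs need not even terminate after the same number of iterations. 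As written the lemma is unproven and its truth is not obvious, so your proposal is incomplete at precisely the point where it tries to be more rigorous than the paper; if you instead adopt the paper's implicit assumption that the feasible set is fixed across reports, your argument closes and essentially coincides with the published one.
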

	\begin{proof}
		We may discuss a driver change in utility when submitting an untruthful bid. A driver may be (1) selected, or (2) deselected for any MSTs when submits a bid truthfully. 
		
		By Equation \ref{ta2payoff}, there are three cases of the payment to any selected driver. The utility in submitting an untruthful bid may also evaluated accordingly.
		
		(1) $\Psi^{\prime}(\widetilde{D}_t\backslash\{d\}) < \Psi^{\prime}(\widetilde{D}_t)$. This case occurs only when the driver $d$ is the only one eligible bidder for $k$. When $d$ is excluded, an MST would also be left unassigned. In this case, the driver $d$ will obtain the upper bound of the own true valuation despite the bid. Thus, driver $d$'s utility remains the same if the bids satisfy the budget constraint and will be zero otherwise.
		
		(2) $\Psi^{\prime}(\widetilde{D}_t\backslash\{d\}) \geq \Psi^{\prime}(\widetilde{D}_t)$ and $\Psi^{\prime}(\widetilde{D}_t\backslash\{d\}) - \Psi^{\prime}(\widetilde{D}_t) > \overline{v}_{dk}^{t} - v_{dk}^{t}$. It means that the marginal benefit of the driver $d$ is above the valuation the upper bound of valuation minus true valuation. The driver $d$ who is selected will obtain the upper bound of the own true valuation. In this case, the utility of driver $d$ remains the same if the bids satisfy the budget constraint and will be zero otherwise.
		
		(3) $\Psi^{\prime}(\widetilde{D}_t\backslash\{d\}) \geq \Psi^{\prime}(\widetilde{D}_t)$ and $\Psi^{\prime}(\widetilde{D}_t\backslash\{d\}) - \Psi^{\prime}(\widetilde{D}_t) \leq \overline{v}_{dk}^{t} - v_{dk}^{t}$. Assuming that the driver $d$ can obtain higher payoff $\hat{p}_d^{t}$ by untruthful submission. Then, we have:
		\begin{equation*}
			\hat{p}_d^{t} - v_{dk}^{t} > p_d^{t} - v_{dk}^{t} = \Psi^{\prime}(\widetilde{D}_t\backslash\{d\}) - \Psi^{\prime}(\widetilde{D}_t)
		\end{equation*}
		
		Substituting $\hat{p}_d^{t} = \hat{v}_{dk}^{t} + \big(\Psi^{\prime}(\widetilde{D}_t\backslash\{d\}) - \hat{\Psi}^{\prime}(\widetilde{D}_t)\big)$ into above inequality, we get:
		\begin{align*}
			\quad &\hat{v}_{dk}^{t} + \big(\Psi^{\prime}(\widetilde{D}_t\backslash\{d\}) - \hat{\Psi}^{\prime}(\widetilde{D}_t)\big) - v_{dk}^{t} > \Psi^{\prime}(\widetilde{D}_t\backslash\{d\}) - \Psi^{\prime}(\widetilde{D}_t) \\
			\Rightarrow\quad & \hat{v}_{dk}^{t} - \hat{\Psi}^{\prime}(\widetilde{D}_t) - v_{dk}^{t} > - \Psi^{\prime}(\widetilde{D}_t) \\
			\Rightarrow\quad &\hat{\Psi}^{\prime}(\widetilde{D}_t) + v_{dk}^{t} - \hat{v}_{dk}^{t} < \Psi^{\prime}(\widetilde{D}_t) \\
			\Leftrightarrow\quad &\hat{v}_{dk}^{t} + \sum_{(l,j)\in\mathscr{WK}_t}v_{lj}^{t}x_{lj}^{t} + v_{dk}^{t} - \hat{v}_{dk}^{t} < \Psi^{\prime}(\widetilde{D}_t) \\
			\Rightarrow\quad &v_{dk}^{t} + \sum_{(l,j)\in\mathscr{WK}_t}v_{lj}^{t}x_{lj}^{t} < \Psi^{\prime}(\widetilde{D}_t)
		\end{align*}
		
		It contradicts that $\Psi^{\prime}(\widetilde{D}_t) = v_{dk}^{t} + \sum_{(l,j)\in\mathscr{WK}_t}v_{lj}^{t}x_{lj}^{t}$ is optimal objective value. 
		
		Therefore, the utility of any selected driver $d$ can not increase by untruthful bidding. 
		
		We may now discuss the utility change of any deselected driver when submits an untruthful bid. There are two cases for these drivers. 
		
		(1) When the driver $d$ overbids, the valuations to all tasks increase. The optimal driver-MST pairs remain unchanged, which means that the driver $d$ is still deselected. In this case, the utility of driver $d$ remains zero.
		
		(2) When the driver $d$ underbids, the valuations to all tasks decrease. As the bid continuously decreases, $d$ may take over task $k$ from the initially assigned driver $l$, whose valuation to the task is $v_{lk}^{t}$. Let $v_{dk}^{t}, \hat{v}_{dk}^{t}$ denote driver $d$'s true valuation and false valuation to task $k$, respectively. When $\hat{v}_{dk}^{t} > v_{lk}^{t}$, the driver $d$ is still not selected and the utility remains the same. When $\hat{v}_{dk}^{t} \leq v_{lk}^{t}$, $d$ replaces $l$ as the selected driver, and we have $\hat{v}_{dk}^{t} \leq v_{lk}^{t} \leq v_{dk}^{t} \leq \overline{v}_{dk}^{t}$. Let the sum of the other driver-MST pairs' true valuations be $\varPsi_{-k}$. Then $d$'s payoff could be calculated by Equation \eqref{ta2payoff}:
		\begin{align*}
			\hat{p}_d &= \min\big\{\hat{v}_{dk}^{t} + \big(\hat{\Psi^{\prime}}((\widetilde{D}_t\backslash\{d\})) - \hat{\Psi^{\prime}}(\widetilde{D}_t)\big), \overline{v}_{dk}^{t} \big\} \\
			&= \min\big\{\hat{v}_{dk}^{t} + \big(v_{lk}^{t} + \varPsi_{-k} - (\hat{v}_{dk}^{t} + \varPsi_{-k}) \big), \overline{v}_{dk}^{t} \big\} = \min\big\{v_{lk}^{t}, \overline{v}_{dk}^{t}\big\} = v_{lk}^{t} \leq v_{dk}^{t}
		\end{align*}
		
		The payoff of driver $d$ is less than the own true valuation $v_{dk}^{t}$ when submitting untruthfully. In this case, the utility of driver $d$ is negative.
		
		Thus, submitting truthfully to the platform is a dominant strategy for either a selected or a deselected driver. This completes the proof.
	\end{proof}

	\section{Computational experiments}\label{section:numer}
	We conducted numerical tests using a large-scale instance generated from New York City taxi data. Initially, we assessed the performance of both the VCG-MST and RBC-MST mechanisms across various scenarios. Following this, we examined how taxi fleet size and the ratio between the two types of taxis impact the completion rate of sensing tasks. Lastly, we investigated the average payoffs for both Type-A and Type-B drivers under different bidding bounds, aiming to determine the willingness of Type-B drivers to undertake MSTs. 
	
	All algorithms presented in this article were implemented in Python 3.10, and integer programming problems were solved using Gurobi 10.0.0.
	\subsection{Description of experimental data and parameter settings}
	We chose the Manhattan borough of New York City as the testing area for our numerical experiments. Taking advantage of yellow taxi trip records from January 3rd to January 6th, 2022, sourced from the NYC Taxi and Limousine Commission (TLC) \citep{TLC2022}, we inferred estimated cruising times for calculating opportunity costs in the trip matching process. To assess the performance of the proposed mechanisms, we employed trip records from January 7th, 2022, spanning two hours.
	
	To simplify calculations, we converted the area IDs of pick-up and drop-off locations for each trip record into network node IDs corresponding to the respective area. To test the performance of mechanisms under different trip demand scenarios, we extracted low trip demand scenarios (1,000 trip requests during the time horizon) and high trip demand scenarios (2,000 trip requests during the time horizon) from the raw data using standard sampling techniques. It is worth noting that the extracted trip requests is a subset of the actual trip requests. By which, we computed the average origin-destination (OD) distance for all trip requests. The value is around 3km. Thus, we set $l_0\approx 3$ km.
	
	The overall time horizon was set to 2 hours. Type-A taxi operations were scheduled every 30 seconds, while Type-B taxi planning cycles were set at 5 minutes. Each cycle was divided into 3 phases as described in Section \ref{section:model}: ride-hailing task matching $T_1$, mobile sensing task bidding $T_2$, and mobile sensing task assignment $T_3$. The durations of these phases were 3 minutes, 1.5 minutes, and 30 seconds, respectively.
	
	We randomly selected the locations of 80 MSTs within the study area. Figure \ref{fig:LH_tripdemand} displays the heat map of trip demand scenarios and the locations of MSTs within the Manhattan road network. Additionally, we randomly placed the depot of dedicated vehicles within the area to facilitate distance calculations between the dedicated vehicles and the MSTs. Refer to Table \ref{tab:parameter_settings} for parameter values.
	\begin{figure}[htbp]
		\centering
		\includegraphics[width=\textwidth]{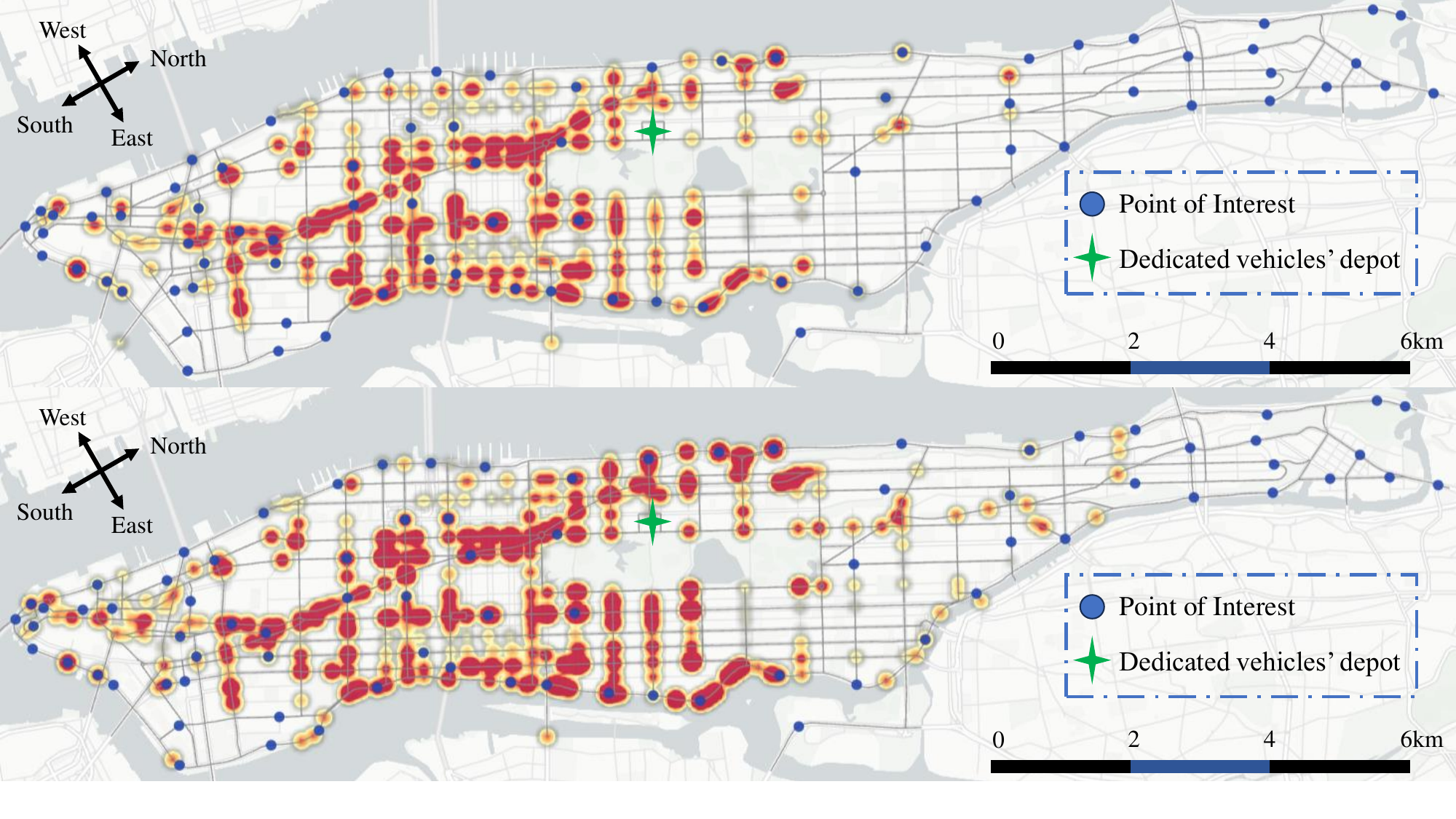}
		\caption{\centering{The low trip demand scenario, the high trip demand scenario and the locations of selected PoIs.}}
		\label{fig:LH_tripdemand}
	\end{figure}
	\begin{table}[ht]
		\centering
		\scriptsize
		\caption{Values of parameters}
		\begin{tabularx}{\textwidth}{p{0.2\textwidth}X p{0.2\textwidth}X}
			\toprule[2pt]
			Parameters & Values & Parameters & Values \\
			\midrule
			$p_r^{s}$ & 12 units & $\alpha$ & 2 units/km \\
			$\beta_1$ & 1.70 units/km & $\underline{b}$ & 2 units/km \\
			$\beta_2$ & 0.50 units/min & $\overline{b}$ & 4 units/km \\
			$L_0^{s}$ & 3km & $\mu$ & 1 unit/km \\
			$t_0^{s}$ & 10min & $c_q$ & 8 units/km \\
			$L_{ub}$ & 2km & $C$ & 15 units \\
			$\overline{V}$ & 35km/h & $l_0$ & 3km \\
			\bottomrule[2pt]
		\end{tabularx}
		\label{tab:parameter_settings}
	\end{table}
	
	\subsection{Performance the VCG-MST and RBC-MST under different travel demands}\label{subsection:experiment1}
	We begin by evaluating the performance of the two mechanisms under different levels of travel demand. The total number of vehicles remains fixed at 140, while the ratio between the two types of vehicles varies across the tests. After executing the coordination strategy, three key indices are calculated: the social surplus, the remaining budget, and the completion rate of MSTs by taxis.
	
	The social surplus represents the difference between the fixed cost of using dedicated vehicles to complete tasks and the total expense incurred by the ride-hailing platform through the two MST assignment mechanisms. Essentially, it reflects the combined interests of the data user and the ride-hailing platform. The social surplus can be calculated as follows:
	\begin{equation}\label{social_welfare}
		SS = \sum_{k\in \mathcal{K}}c_q l_{qk} - \sum_{T\in\mathcal{T}}\sum_{(d,k)\in \mathscr{W}\mathscr{K}_T}p_{dk}^{T},
	\end{equation} where the first term on the right-hand side represents the fixed cost of utilizing dedicated vehicles to complete all MSTs, while the second term denotes the total expenses incurred by the ride-hailing platform for hailing tasks.
	
	The remaining budget quantifies the deficit or revenue of the ride-hailing platform in the MST business. Let $RB$ represent the remaining budget at the end of the time horizon, and $B_{init}$ denote the initial budget provided in advance. $RB$ can be calculated as follows:
	\begin{equation}\label{remaining_budget}
		RB = B_{init} - \sum_{T\in\mathcal{T}}\sum_{(d,k)\in \mathscr{W}\mathscr{K}_T}p_{dk}^{T}.
	\end{equation}
	
	The completion rate of MSTs by taxis gauges the appeal of taxi-based mobile sensing to the data user, indicating the extent to which the taxi fleet can substitute dedicated vehicles. Let $CR$ represent the completion rate of MSTs, $\lvert \mathscr{WK}_T\rvert$ denote the number of task orders assigned to drivers in planning cycle $T$, and $\lvert\mathcal{K}\rvert$ denote the total number of MSTs. $CR$ can be calculated as follows:
	
	\begin{equation}\label{completion_rate}
		CR = \frac{\sum_{T\in\mathcal{T}}\lvert \mathscr{W}\mathscr{K}_T\rvert}{\lvert\mathcal{K}\rvert}.
	\end{equation}
	
	Due to the random nature of the inputs, we conducted 5 repeated experiments for both the low-demand and high-demand cases, calculating the average of each index. Figure \ref{fig:Ltd_3subfigure} compares the performance of the two MST assignment mechanisms under the low trip demand scenario. It's notable that the RBC-MST mechanism outperforms the VCG-MST mechanism in terms of social welfare ($SW$) and remaining budget ($RB$), but performs slightly worse than the VCG-MST mechanism in completion rate ($CR$). This discrepancy arises from the allocation rules of the two mechanisms.
	
	The RBC-MST mechanism regulates the number of MSTs commissioned in each planning cycle based on the allocated budget. Consequently, some tasks, which might not be economically viable for the ride-hailing platform upon initial release, are deferred and assigned to drivers in subsequent auctions. This approach enables the platform to generate more social surplus and maintain a higher remaining budget. On the contrary, the VCG-MST mechanism assigns MSTs to drivers as long as their contributions to the objective are positive, following a more greedy approach. While this results in a higher task completion rate, it may lead to excessive expenses on specific MSTs, potentially causing a deficit.
	
	From Figure \ref{fig:Ltd_3subfigure}, we observe that the RBC-MST mechanism operates optimally when the taxi fleet consists of 20 Type-B vehicles and 120 Type-A vehicles in this low trip demand scenario. Under these conditions, both the social surplus and completion rate reach their maximum, and the remaining budget remains non-negative.
	
	\begin{figure}[htbp]
		\centering
		\includegraphics[width=\textwidth]{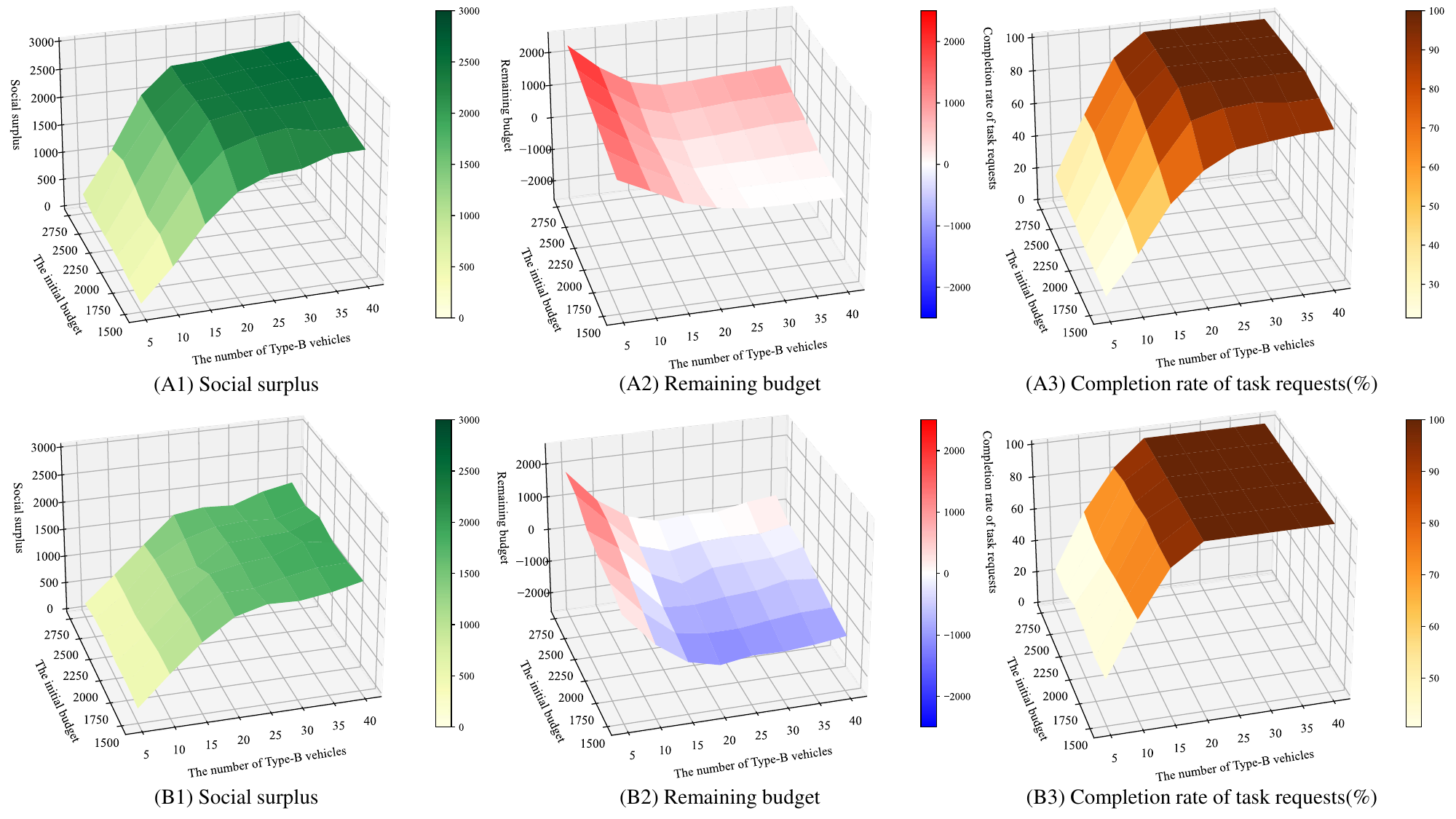}
		\caption{\centering{Comparison of (A) RBC-MST and (B) VCG-MST mechanism under the low trip demand scenario}}
		\label{fig:Ltd_3subfigure}
	\end{figure}
	
	Similar numerical experiments were conducted for the high trip demand scenario, and the results of the RBC-MST and VCG-MST mechanisms are summarized in Figure \ref{fig:Htd_3subfigure}. Notably, there are no significant differences between these two mechanisms across all three indices, although the RBC-MST mechanism performs slightly better in terms of social surplus and remaining budget, but slightly worse in completion rate. This observation can be attributed to the fact that when travel demand is high, most Type-A and Type-B vehicles are allocated to hailing orders rather than MSTs.
	\begin{figure}[htbp]
		\centering
		\includegraphics[width=\textwidth]{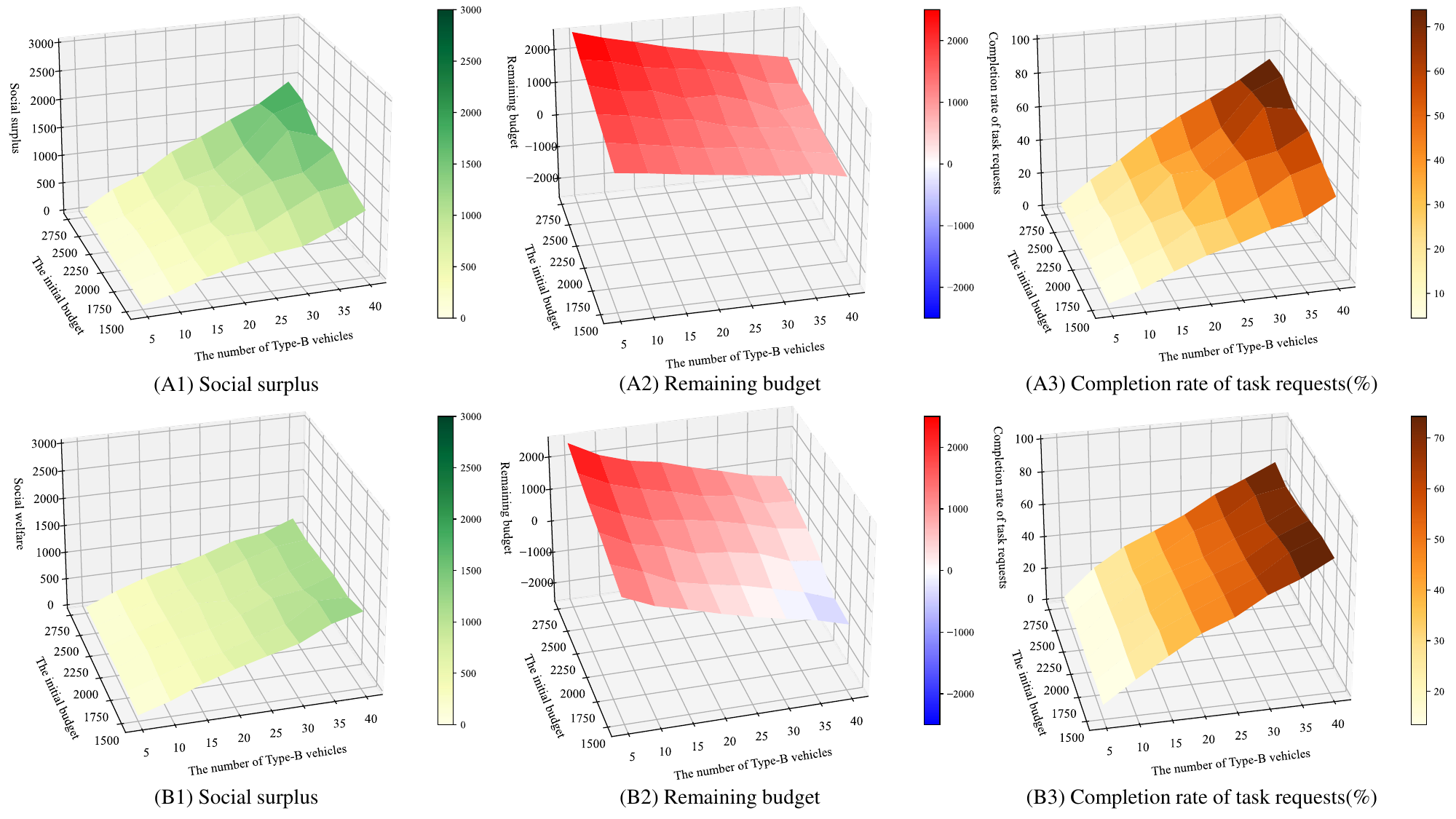}
		\caption{\centering{Comparison of (A) RBC-MST and (B) VCG-MST mechanism under the high trip demand scenario}}
		\label{fig:Htd_3subfigure}
	\end{figure}
	
	\subsection{Impact of the vehicle fleet sizes} \label{subsection:experiment2}
	In this subsection, we investigate the impact of taxi fleet sizes on the performances of the two mechanisms. We measure the service level for riders by calculating the average trip matching rate and average waiting time. Additionally, we evaluate the effectiveness of the coordinated strategy for mobile sensing using the social surplus and the completion rate of MSTs. For these experiments, we employ 20 Type-B vehicles in the taxi network, while the number of Type-A vehicles range from 40 to 160 with a step size of 20. Each experiment is repeated 5 times, and the average value of each index is calculated and presented in Figure \ref{fig:trip_kpi}. It's observed that the trip matching rate generally increases and the average waiting time of riders decreases with the increase in the number of Type-A vehicles. Furthermore, both mechanisms provide great service level of trip matching as the size of vehicle fleet increases.
	
	\begin{figure}[htbp]
		\centering
		\includegraphics[width=0.95\textwidth]{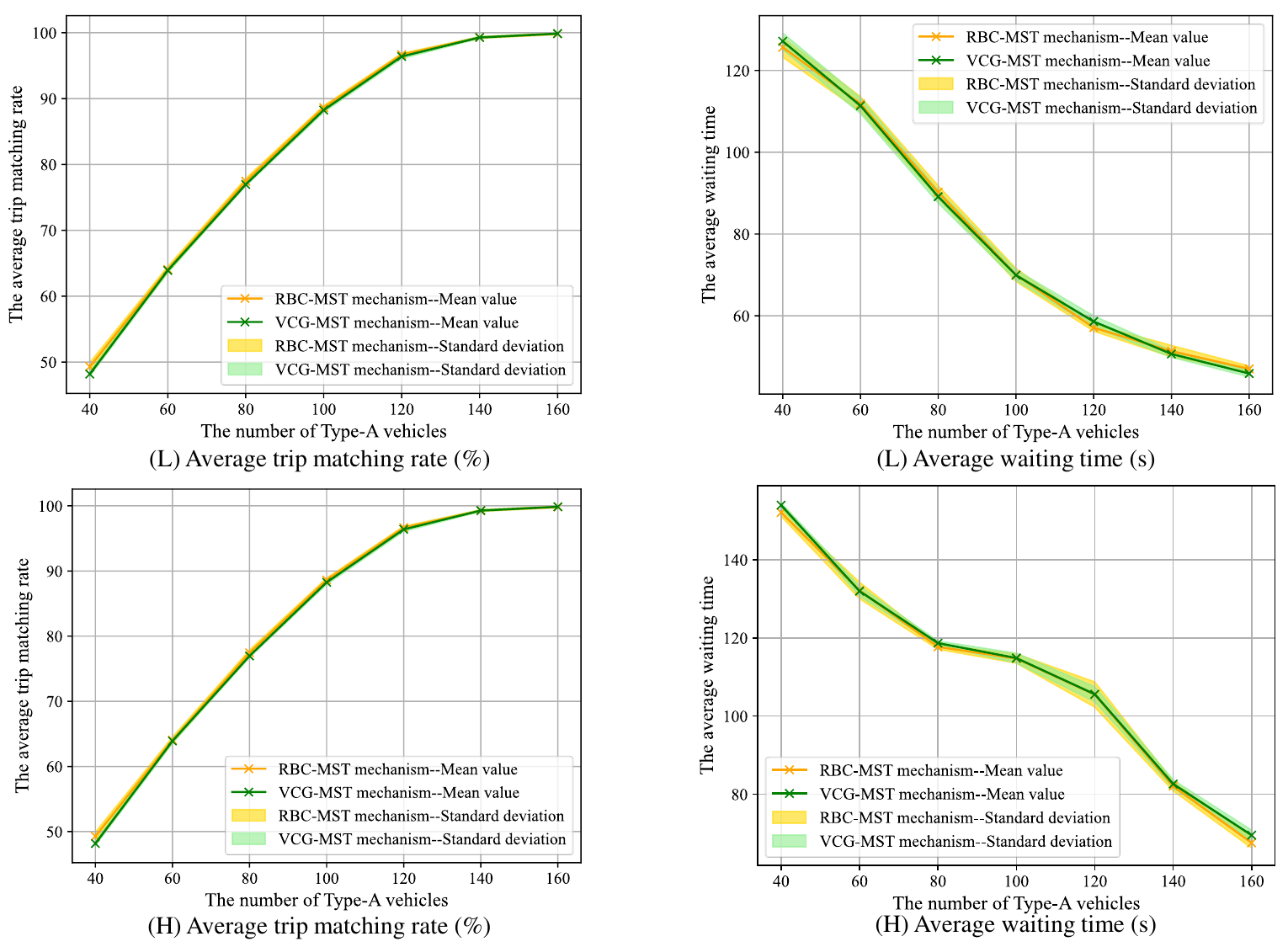}
		\caption{\centering{The impact of the various number of Type-A vehicles on trip matching in (L) low travel demand scenario and (H) high travel demand scenario}}
		\label{fig:trip_kpi}
	\end{figure}
	
	Figure \ref{fig:task_kpi} illustrates the impact of the number of Type-A vehicles on the performances of the MST assignment. In scenarios with low travel demand, the social surplus increases with the number of available vehicles for both MST assignment mechanisms. However, this increase plateaus when the total number of vehicles reaches around 120, indicating that all sensing tasks could be efficiently commissioned to the Type-B vehicles. Notably, the RBC-MST mechanism consistently yields a large social surplus across all tests, making it particularly attractive to both the data user and the ride-hailing platform.
	
	While a similar increasing pattern of the social surplus is observed in scenarios with high travel demand, no mechanism demonstrates consistently superior performance across all tests. The VCG-MST mechanism achieves a higher completion rate and significant social surplus when the number of Type-A vehicles is around 100. However, this advantage diminishes as the number of vehicles exceeds 120.
	
	\begin{figure}[htbp]
		\centering
		\includegraphics[width=0.95\textwidth]{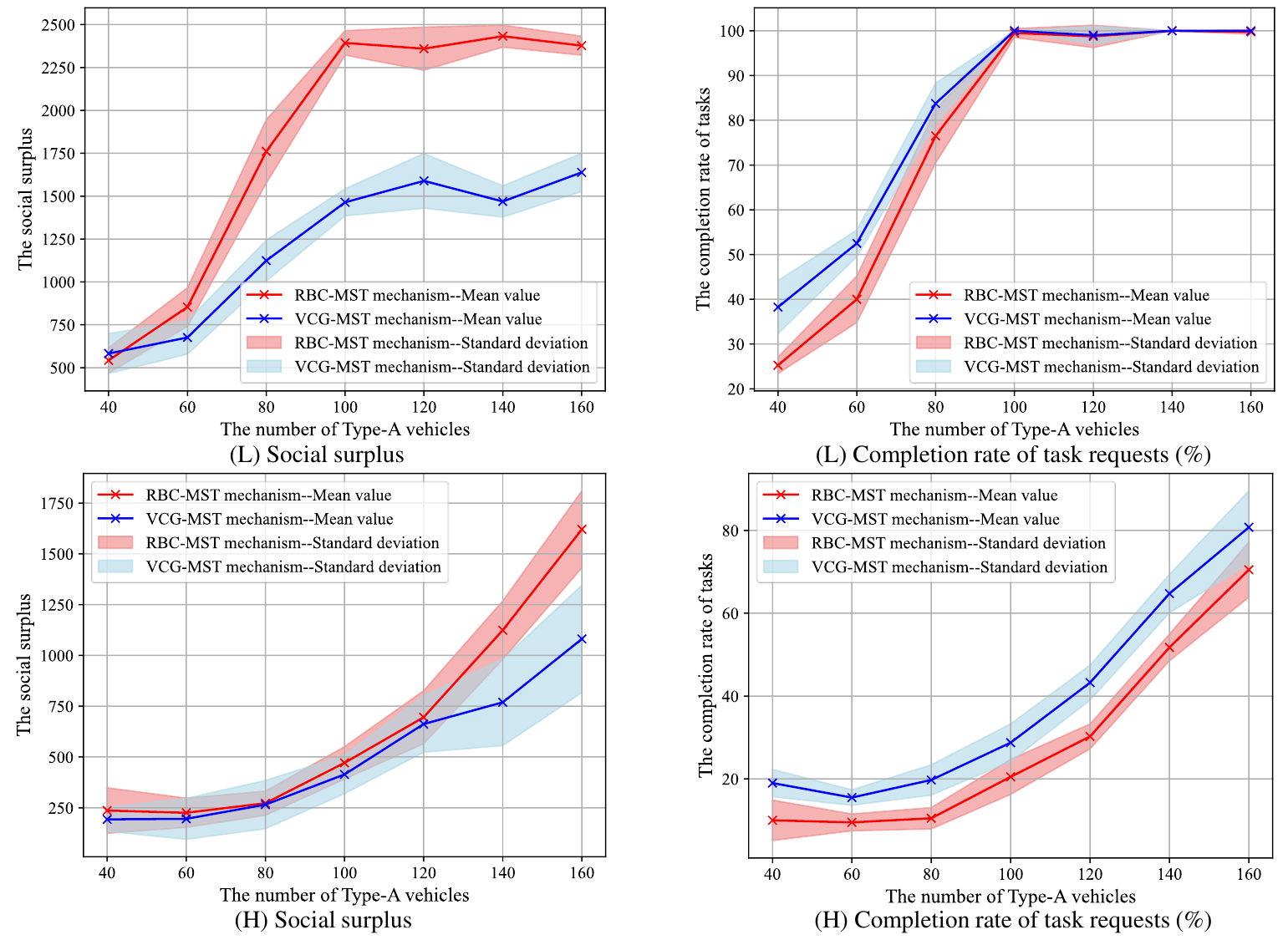}
		\caption{\centering{The impact of the various number of Type-A vehicles on MST assignment in (L) low travel demand scenario and (H) high travel demand scenario}}
		\label{fig:task_kpi}
	\end{figure}
	
	\subsection{Income of the Type-A and Type-B drivers}
	Given the potential for generating high social surplus in scenarios with low travel demand, we may first investigate the welfare of the drivers operating in the taxi-based mobile sensing business under these conditions. For this examination, we set the initial budget and fleet size to 2,000 and 140, respectively.
	
	We might also wonder how to incentivize drivers to accept MSTs. As outlined in Table \ref{tab:parameter_settings}, the bid value per unit distance for vehicles ranges from 2 to 7. The lower bound corresponds to the unit income of Type-A drivers, while the upper bound aligns with the unit cost of dedicated vehicles. We can further subdivide this range into smaller increments, such as [2,4], [3,5], [4,6], and [5,7]. Conducting 5 experiments for each range of bounds, we summarize the trip matching and MST assignment indices in Table \ref{tab:LTD_data_result}. With increasing bid bounds, the average payoff for Type-B drivers rises, though at the expense of social surplus and completion rate.
	
	\begin{table}[htbp]
		\centering
		\scriptsize
		\caption{The impact of performing sensing tasks under the low trip demand scenario}
		\begin{threeparttable}
			\begin{tabularx}{\textwidth}{>{\centering\arraybackslash}p{0.05\textwidth} >{\centering\arraybackslash}p{0.05\textwidth} >{\centering\arraybackslash}p{0.05\textwidth} >{\centering\arraybackslash}p{0.05\textwidth} >{\centering\arraybackslash}X >{\centering\arraybackslash}X >{\centering\arraybackslash}X >{\centering\arraybackslash}X >{\centering\arraybackslash}X >{\centering\arraybackslash}X}
				\toprule[2pt]
				\multirow{2}*{BD} & \multirow{2}*{$B_{init}$} & \multirow{2}*{$N_{AB}$} & \multirow{2}*{$N_B$} & \multicolumn{2}{c}{Trip matching} & \multicolumn{2}{c}{Task assignment} & \multirow{2}*{AP-A} & \multirow{2}*{AP-B} \\ \cline{5-8}
				\rule{0pt}{10pt} & & & & AWT (s) & ATR (\%) & ACR (\%) & ASW  & & \\
				\midrule
				\multirow{5}*{$U[2,4]$} & \multirow{5}*{2000} & \multirow{5}*{140} & 20 & 57.77 & 100 & 100 & 2356.17 & 87.24 & 143.05 \\
				& & & 25 & 58.40 & 100 & 100 & 2448.55 & 89.10 & 122.92 \\
				& & & 30 & 59.49 & 99.9 & 100 & 2453.94 & 90.55 & 111.59 \\
				& & & 35 & 61.67 & 100 & 100 & 2485.01 & 91.72 & 104.26 \\
				& & & 40 & 61.89 & 99.9 & 100 & 2494.86 & 92.84 & 99.01 \\
				\midrule
				\multirow{5}*{$U[3,5]$} & \multirow{5}*{2000} & \multirow{5}*{140} & 20 & 57.76 & 99.9 & 97.5 & 2205.85 & 87.61 & 143.95 \\
				& & & 25 & 58.52 & 100 & 98.6 & 2321.43 & 88.43 & 129.16 \\
				& & & 30 & 58.88 & 100 & 99.9 & 2379.44 & 89.87 & 116.10 \\
				& & & 35 & 60.24 & 99.9 & 99.9 & 2396.47 & 91.02 & 108.97 \\
				& & & 40 & 61.06 & 99.9 & 100 & 2420.39 & 92.15 & 103.24 \\
				\midrule
				\multirow{5}*{$U[4,6]$} & \multirow{5}*{2000} & \multirow{5}*{140} & 20 & 58.14 & 99.9 & 94.9 & 1972.08 & 86.73 & 149.21 \\
				& & & 25 & 57.44 & 100 & 96.2 & 2128.48 & 88.88 & 127.25 \\
				& & & 30 & 59.34 & 100 & 97.2 & 2168.21 & 89.06 & 120.18 \\
				& & & 35 & 59.41 & 100 & 98.7 & 2313.82 & 91.84 & 107.47 \\
				& & & 40 & 61.69 & 99.9 & 98.6 & 2291.80 & 92.38 & 104.36 \\
				\midrule
				\multirow{5}*{$U[5,7]$} & \multirow{5}*{2000} & \multirow{5}*{140} & 20 & 57.65 & 99.9 & 91.1 & 1777.37 & 86.84 & 148.58 \\
				& & & 25 & 57.86 & 99.9 & 92.7 & 2071.31 & 88.18 & 130.55 \\
				& & & 30 & 58.58 & 99.9 & 94.9 & 2046.30 & 89.06 & 120.30 \\
				& & & 35 & 60.01 & 99.9 & 96.7 & 2160.73 & 91.19 & 109.39 \\
				& & & 40 & 60.41 & 99.9 & 97.5 & 2217.19 & 91.57 & 105.99 \\
				\bottomrule[2pt]
			\end{tabularx}
			\begin{tablenotes}
				\item \scriptsize Notes:
				\item \scriptsize 1. BD: the bounds of bids, $N_{AB}$: the total number of vehicles, $N_B$: the number of Type-B vehicles.
				\item \scriptsize 2. AWT: average waiting time, ATR: average trip matching rate, ACR: average completion rate, ASW: average social welfare.
				\item \scriptsize 3. AP-A: average payoff of Type-A drivers, AP-B: average payoff of Type-B drivers.
			\end{tablenotes}
		\end{threeparttable}
		\label{tab:LTD_data_result}
	\end{table}
	
	Figure \ref{fig:LTD_driver_earnings} depicts the average earnings of Type-A and Type-B drivers across these experiments. When the count of Type-B vehicles is 30 or fewer, and the bid prices fall within the interval $[2,4]$, Type-B drivers stand to earn significantly more than their Type-A counterparts by undertaking monitoring tasks. Furthermore, the increment in the number of Type-B drivers doesn't markedly impact the earnings of Type-A drivers.
	\begin{figure}[htbp]
		\centering
		\includegraphics[width=\textwidth]{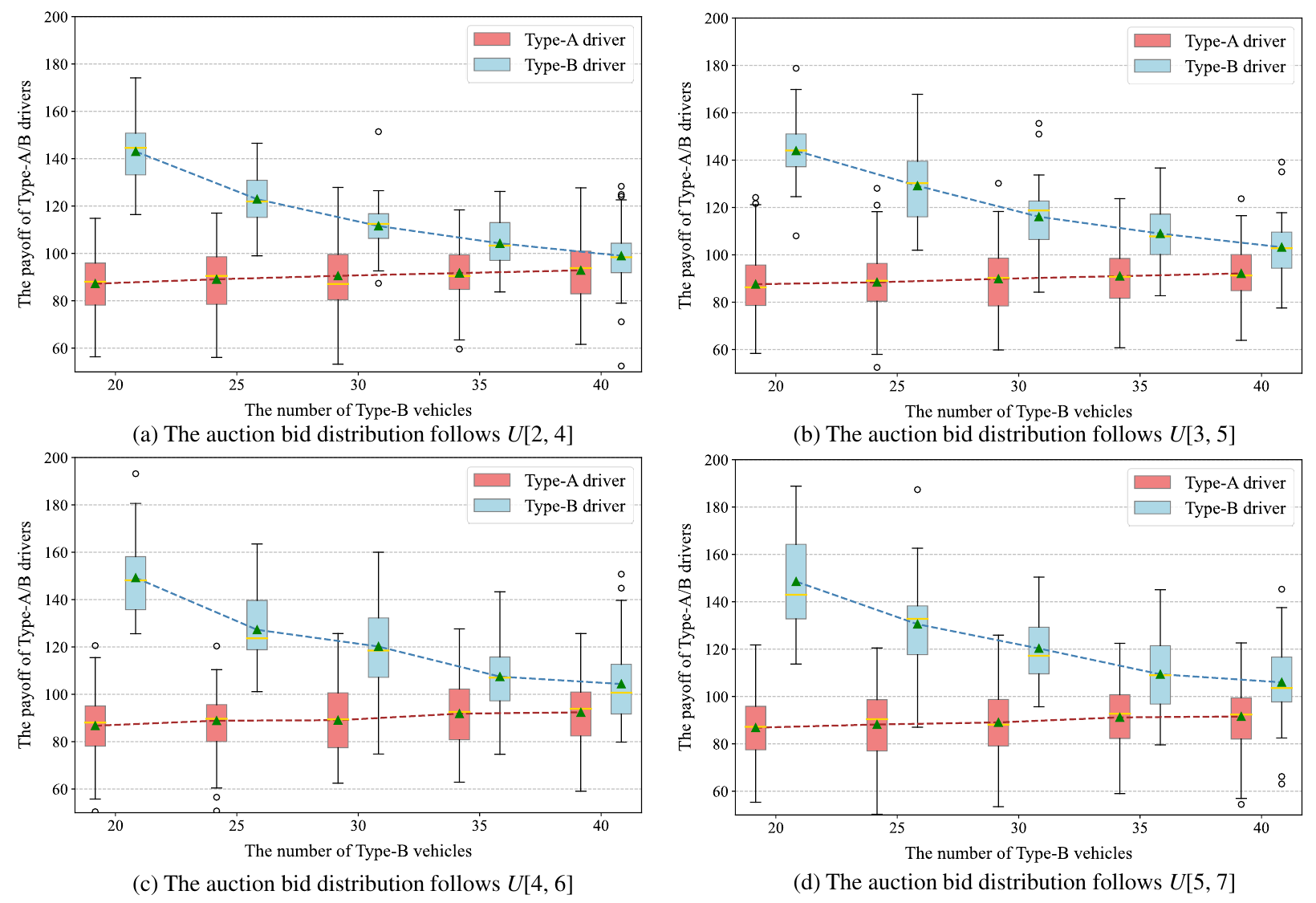}
		\caption{\centering{The average payoff trend of Type-A drivers and Type-B drivers under the low trip demand scenario (constant initial budget and fixed fleet size)}}
		\label{fig:LTD_driver_earnings}
	\end{figure}
	
	We conducted similar experiments in the high-demand scenario, and the performance indices for trip matching and MST assignment are presented in Table \ref{tab:HTD_data_result}. We generally observed similar trends in the performance indices as in the low-demand scenario when the parameters changed. However, in scenarios where the bidding bounds are too low, particularly within the range of $[2,4]$, Type-B drivers may incur losses, as depicted in Figure \ref{fig:HTD_driver_earnings}. This outcome is reasonable because drivers may have to travel away from areas with high ride-hailing demand to perform MSTs. While the unit payoff for performing sensing tasks may be higher, they may miss out on opportunities to serve more riders. Another interesting observation is that the average payoff of Type-B drivers can slightly increase with the rise in the number of Type-B drivers. This could be because Type-B drivers are more sparsely distributed in the area, requiring fewer of them to relocate to distant MSTs.
	
	\begin{figure}[htbp]
		\centering
		\includegraphics[width=\textwidth]{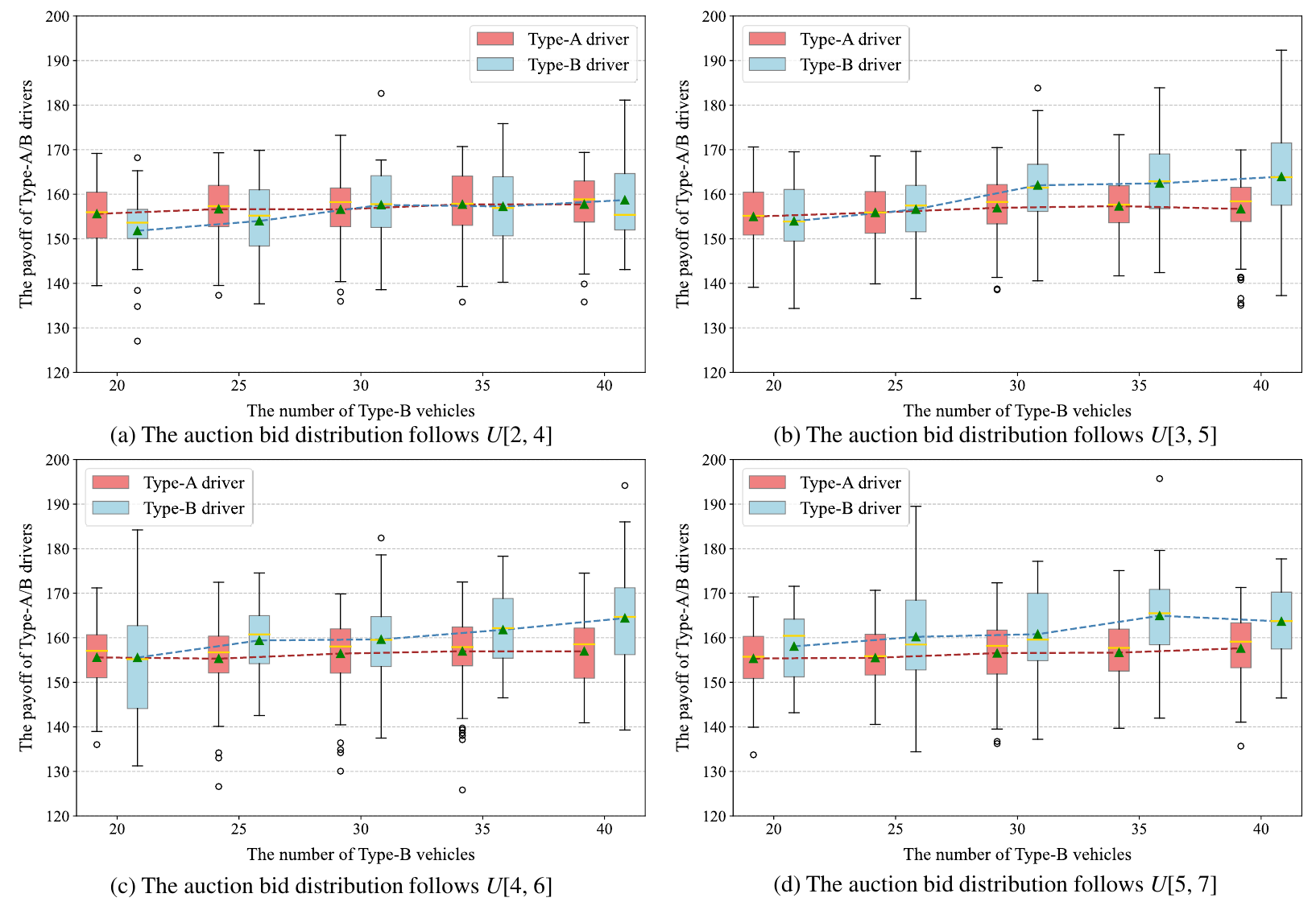}
		\caption{\centering{The average payoff trend of Type-A drivers and Type-B drivers under the high trip demand scenario (constant initial budget and fixed fleet size)}}
		\label{fig:HTD_driver_earnings}
	\end{figure}
	
	\begin{table}[htbp]
		\centering
		\scriptsize
		\caption{The impact of performing sensing tasks under the high trip demand scenario}
		\begin{threeparttable}
			\begin{tabularx}{\textwidth}{>{\centering\arraybackslash}p{0.05\textwidth} >{\centering\arraybackslash}p{0.05\textwidth} >{\centering\arraybackslash}p{0.05\textwidth} >{\centering\arraybackslash}p{0.05\textwidth} >{\centering\arraybackslash}X >{\centering\arraybackslash}X >{\centering\arraybackslash}X >{\centering\arraybackslash}X >{\centering\arraybackslash}X >{\centering\arraybackslash}X}
				\toprule[2pt]
				\multirow{2}*{BD} & \multirow{2}*{$B_{init}$} & \multirow{2}*{$N_{AB}$} & \multirow{2}*{$N_B$} & \multicolumn{2}{c}{Trip matching} & \multicolumn{2}{c}{Task assignment} & \multirow{2}*{AP-A} & \multirow{2}*{AP-B} \\ \cline{5-8}
				\rule{0pt}{10pt} & & & & AWT (s) & ATR (\%) & ACR (\%) & ASW & & \\
				\midrule
				\multirow{5}*{$U[2,4]$} & \multirow{5}*{2000} & \multirow{5}*{140} & 20 & 100.22 & 96.5 & 30 & 618.19 & 155.59 & 151.79 \\
				& & & 25 & 100.64 & 96.5 & 36.6 & 780.43 & 156.66 & 154.00 \\
				& & & 30 & 101.59 & 96.2 & 47.5 & 1147.17 & 156.60 & 157.61 \\
				& & & 35 & 101.50 & 96.0 & 55.8 & 1455.09 & 157.74 & 157.25 \\
				& & & 40 & 102.54 & 95.6 & 62 & 1602.93 & 157.74 & 158.70 \\
				\midrule
				\multirow{5}*{$U[3,5]$} & \multirow{5}*{2000} & \multirow{5}*{140} & 20 & 99.81 & 96.6 & 27.1 & 447.35 & 154.98 & 153.96 \\
				& & & 25 & 99.67 & 96.4 & 35.2 & 690.41 & 155.89 & 156.61 \\
				& & & 30 & 101.34 & 96.2 & 46.9 & 1169.04 & 156.95 & 162.02 \\
				& & & 35 & 102.09 & 96.1 & 55.4 & 1233.07 & 157.33 & 162.45 \\
				& & & 40 & 102.10 & 95.8 & 65.2 & 1415.69 & 156.70 & 163.94 \\
				\midrule
				\multirow{5}*{$U[4,6]$} & \multirow{5}*{2000} & \multirow{5}*{140} & 20 & 98.99 & 96.6 & 27.5 & 579.98 & 155.59 & 155.56 \\
				& & & 25 & 101.27 & 96.6 & 31.2 & 659.30 & 155.31 & 159.41 \\
				& & & 30 & 101.71 & 96.2 & 40.6 & 846.27 & 156.45 & 159.63 \\
				& & & 35 & 102.18 & 96.0 & 52.6 & 994.53 & 156.98 & 161.79 \\
				& & & 40 & 102.06 & 95.7 & 61.1 & 1285.43 & 156.98 & 164.41 \\
				\midrule
				\multirow{5}*{$U[5,7]$} & \multirow{5}*{2000} & \multirow{5}*{140} & 20 & 100.01 & 96.6 & 23.5 & 468.54 & 155.35 & 158.07 \\
				& & & 25 & 101.31 & 96.6 & 34.1 & 501.30 & 155.51 & 160.21 \\
				& & & 30 & 100.91 & 96.3 & 41.1 & 676.95 & 156.56 & 160.80 \\
				& & & 35 & 101.43 & 96.1 & 51.8 & 1089.94 & 156.67 & 164.95 \\
				& & & 40 & 102.49 & 95.8 & 57.6 & 1061.11 & 157.64 & 163.67 \\
				\bottomrule[2pt]
			\end{tabularx}
			\begin{tablenotes}
				\item \scriptsize Notes:
				\item \scriptsize 1. BD: distribution of bidding, $N_{AB}$: the total number of vehicles, $N_B$: the number of Type-B vehicles.
				\item \scriptsize 2. AWT: average waiting time, ATR: average trip matching rate, ACR: average completion rate, ASW: average social welfare.
				\item \scriptsize 3. AP-A: average payoff of Type-A drivers, AP-B: average payoff of Type-B drivers.
			\end{tablenotes}
		\end{threeparttable}
		\label{tab:HTD_data_result}
	\end{table}
	
	\section{Discussions and Conclusion}\label{section:conc}
	This study presents an operational strategy for mobile crowd-sensing using ride-hailing taxis. The strategy tailors task assignment rules for the two vehicle types in the ride-hailing system, prioritizing rider service levels. We address the order assignment problem by matching riders to Type-A and Type-B drivers, employing an auction-based mechanism to assign MSTs to `redundant' Type-B drivers who have been idle for a significant duration. To ensure the data user's benefit and incentivize ride-hailing platforms to engage in mobile sensing, we enhance the VCG mechanism by introducing a budget balance rule for winner selection. Additionally, we devise a novel payment rule to maintain platform budget balance. The RBC-MST mechanism, satisfying IC, IR, and BB properties, is developed as favorable for all ride-hailing system stakeholders. Various scenarios are tested to evaluate the integrated operational strategy and the RBC-MST mechanism. Results indicate that the proposed strategy achieves substantial social surplus while maintaining a satisfactory completion rate.
	
	Another noteworthy observation is the benefit the RBC-MST mechanism provides to Type-B drivers who are inclined to undertake sensing tasks in most scenarios. An exception arises when the unit distance price of bids is constrained within the range of $[2,4]$, coupled with high demand for ride-hailing in the network. In this situation, Type-B drivers may only marginally increase their average earnings from completing MSTs compared to serving riders. However, they miss out on the opportunity to cater to more riders in busy travel hubs. This issue could be addressed by introducing more Type-B vehicles into the ride-hailing system or simply expanding the bounds for the unit price of valid bids.
	
	
	Although the proposed operation strategy and MST assignment mechanism effectively achieve our research objectives, there's room to tailor the methodology for more realistic scenarios. One direct extension could involve allowing Type-B drivers to handle both a ride-hailing order and one or multiple MSTs simultaneously. With appropriate route planning, a Type-B driver could efficiently complete MSTs during a detour while transporting a rider to their destination. This arrangement could potentially increase Type-B drivers' average income and enhance their willingness to undertake MSTs. Fortunately, implementing this concept is not difficult by introducing a tri-partite shareable network of drivers, MSTs, and riders, as described in prior research \citep{alonso2017,ge2021}. Ideal drivers can be selected by solving a similar matching problem as the one presented in Section \ref{section:rbc} on this shareable network.
	
	Another issue worthy of investigation is budget allocation across plan cycles. While this paper discusses two rules--the \textit{one-shot} rule and the \textit{proportionality} rule--both are greedy and may not ensure optimal budget allocation. Alternatively, the budget for each plan cycle could be optimized by \textit{learning} multi-day mobility and service patterns of drivers.
	
	\section*{CRediT authorship contribution statement}
	\textbf{Shenglin Liu}: Conceptualization, Formal analysis, Methodology, Data curation, Validation,Writing - original draft. \textbf{Qian Ge}: Conceptualization, Formal analysis, Methodology, Writing- original draft, Funding acquisition. \textbf{Ke Han}: Methodology, Writing - original draft, Supervision, Project administration, Funding acquisition. \textbf{Daisuke Fukuda}: Validation, Writing- original draft, Writing - review \& editing. \textbf{Takao Dantsuji}: Validation, Writing - original draft, Writing - review \& editing
	\section*{Declaration of competing interest}
	The authors declare that they have no known competing financial interests or personal relationships that could have appeared to influence the work reported in this paper.
	\section*{Data availability}
	The dataset and codes to reproduce the numerical experiments are available on request.
	\section*{Acknowledgment}
	This work is supported by the National Natural Science Foundation of China through grants no. 72101215 and 72071163, the Natural Science Foundation of Sichuan Province through grant no. 2022NSFSC1906, and Technology Innovation and Development Project of Chengdu Science and Technology Bureau through grant no. 2022-YF05-00839-SN.
	
	\bibliographystyle{plainnat}

\end{document}